\documentclass[oneside,english]{amsart}
\usepackage[T1]{fontenc}
\usepackage[latin9]{inputenc}
\usepackage{amsmath}
\usepackage{amsthm}
\usepackage{amstext}
\usepackage{amssymb}
\usepackage{esint}
\usepackage{graphicx}
\usepackage{enumerate}
\usepackage{amscd}


\makeatletter
\newtheorem{theorem}{Theorem}[section]
\newtheorem{lemma}[theorem]{Lemma}

\numberwithin{figure}{section}
\theoremstyle{definition}

\newtheorem{example}[theorem]{Example}

\theoremstyle{remark}
\newtheorem{remark}[theorem]{Remark}

\numberwithin{equation}{section}
\makeatother

\usepackage{color}

\usepackage{babel}

	\DeclareMathOperator{\loc}{loc}
	
	\DeclareMathOperator*{\esssup}{ess\,sup}

\begin{document}

\title[Dirichlet eigenvalues problem]{Conformal composition operators with applications to Dirichlet eigenvalues}
\author{C. Deneche, V. Pchelintsev}

\address{C. Deneche \newline Tomsk State University
36, Lenin Ave., Tomsk 634050, Russia}
\email{carafdenes@gmail.com}

\address{V. Pchelintsev \newline Tomsk State University
36, Lenin Ave., Tomsk 634050, Russia}
\email{va-pchelintsev@yandex.ru}

\begin{abstract}
This paper is concerned with spectral estimates for the first Dirichlet eigenvalue of the degenerate $p$-Laplace operator in bounded simply connected domains $\Omega \subset \mathbb C$. The proposed approach relies on the conformal analysis of the elliptic operators, which allows us to obtain spectral estimates in domains with non-rectifiable boundaries.
\end{abstract}
\maketitle
\footnotetext{\textbf{Key words and phrases:} $p$-Laplace operator, Sobolev spaces, conformal mappings.}
\footnotetext{\textbf{2010
Mathematics Subject Classification:} 35P15, 46E35, 30C35.}

\section{Introduction}

This paper is devoted to study the Dirichlet eigenvalue problem for the degenerate $p$-Laplace operator with $p>2$:
\begin{equation}\label{DEP}
-div(|\nabla f|^{p-2}\nabla f) = \lambda_p |f|^{p-2}f\,\, \text{in }\,\, \Omega,  \quad f=0\,\, \text{on}\,\, \partial \Omega,
\end{equation}
in bounded simply connected domains $\Omega \subset \mathbb{C}$ in terms of the conformal geometry of domains $\Omega$.

We define a weak solution to this spectral problem as follows: a function $f\in W_0^{1,p}(\Omega)$ is called a weak solution to the Dirichlet eigenvalue problem for the degenerate $p$-Laplace operator if there exists a number $\lambda_p$ that such
\begin{equation*}
\iint\limits _{\Omega} (|\nabla f(z)|^{p-2}\nabla f(z) \cdot \nabla g(z))\,dxdy \\
=\lambda_p \iint\limits _{\Omega} |f(z)|^{p-2} f(z) g(z)\,dxdy,
\end{equation*}
for all $g \in W_0^{1,p}(\Omega)$. We refer to $\lambda_p$ as an eigenvalue and $f$ as the eigenfunction corresponding to $\lambda_p$.

It is well known \cite{LP90} that the first Dirichlet eigenvalue $\lambda_{p}^{(1)}(\Omega)$ of the $p$-Laplace operator is simple and isolated for any bounded domain $\Omega \subset \mathbb C$.
By the min-max principle (see, for example, \cite{L,M}), the first
eigenvalue $\lambda_{p}^{(1)}(\Omega)$ can be characterized as
\[
\lambda_{p}^{(1)}(\Omega)=\inf_{f \in W_0^{1,p}(\Omega) \setminus \{0\}}
\frac{\iint\limits _{\Omega} |\nabla f(z)|^p~dxdy}{\iint\limits _{\Omega} |f(z)|^p~dxdy}\,.
\]

It follows that $\lambda_{p}^{(1)}(\Omega)^{-\tfrac{1}{p}}$ coincides with the best constant $A_{p,p}(\Omega)$ in the
$(p,p)$-Sobolev--Poincar\'e inequality
$$
\left(\iint\limits_\Omega |f(z)|^p dxdy\right)^{\frac{1}{^p}} \leq A_{p,p}(\Omega)
\left(\iint\limits_\Omega |\nabla f(z)|^p dxdy\right)^{\frac{1}{p}},\quad \forall f\in W_{0}^{1,p}(\Omega).
$$

Note that, for $p \neq 2$ the first eigenvalue $\lambda_{p}^{(1)}(\Omega)$ is not explicitly known even for simple symmetric domains such as a square or a disc. Thus, it is important to obtain sharp estimates for
$\lambda_{p}^{(1)}(\Omega)$, which depends on the intrinsic geometry of domains. The most famous example of
such an estimate is the so-called Rayleigh--Faber--Krahn inequality (see, for example, \cite{B99,IL}) asserting that the disc
$\Omega^{\ast}$ minimizes the first Dirichlet eigenvalue of the $p$-Laplace operator among all planar domains $\Omega$ of the
same area:
\[
\lambda_{p}^{(1)}(\Omega) \geq \lambda_{p}^{(1)}(\Omega^*).
\]

In bounded domains, other lower bounds for the first Dirichlet eigenvalue
of the $p$-Laplace operator in terms of the Cheeger constant and inradius were obtained in \cite{AA,KF03,GP}.

In the present paper, using the geometric theory of composition operators \cite{GU09,U93,VU02}, we study the spectral
estimates in bounded simply connected domains $\Omega \subset \mathbb C$ under suitable geometric assumptions.

The first main result of the paper is formulated as follows:
\begin{theorem}\label{th-1}
Let $\Omega$ be a conformal $\alpha$-regular domain about the unit disc $\mathbb D$.
Then for any $p>2$ the following inequality holds
\begin{equation*}
\frac{1}{\lambda_p^{(1)}(\Omega)} \leq \\
 \inf_{q \in (q^*,2)} \left\{ A_{\frac{p \alpha}{\alpha -1}, q}^p(\mathbb D) \cdot \pi^{\frac{p(2 - q)}{2q}}\right\} \cdot |\Omega|^{\frac{p - 2}{2}} \cdot ||J_{\varphi}\,|\,L^{\alpha}(\mathbb D)||,
\end{equation*}
where $q^*=2\alpha p/(\alpha p+ 2(\alpha-1))$, $\alpha >1$ and
\[
A_{\frac{p \alpha}{\alpha -1}, q}(\mathbb D) \leq \left( \dfrac{q - 1}{2 - q} \right)^{\!\frac{q - 1}{q}}
\dfrac{(\sqrt{\pi}\cdot \sqrt[q]{2})^{-1}}
{\sqrt{\Gamma(2/q)\,\Gamma(3 - 2/q)}}.
\]
\end{theorem}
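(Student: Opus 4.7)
The strategy is to pull $f$ back along a Riemann map $\varphi\colon\mathbb{D}\to\Omega$, reducing the estimate to a Sobolev--Poincar\'e inequality on the unit disc and then exploiting the conformal invariance of the Dirichlet energy in two dimensions. Given $f\in W^{1,p}_0(\Omega)\setminus\{0\}$, set $g:=f\circ\varphi$. The conformal $\alpha$-regularity of $\Omega$, combined with the composition-operator theory recalled earlier in the paper, ensures that $g\in W^{1,q}_0(\mathbb{D})$ for every $q\in(q^*,2)$, and that the chain-rule identity $|\nabla g(z)|=|\nabla f(\varphi(z))|\,|\varphi'(z)|$ holds almost everywhere on $\mathbb{D}$.

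The core of the proof is a chain of three H\"older-type steps. First, changing variables $w=\varphi(z)$ and applying H\"older with conjugate exponents $\alpha/(\alpha-1)$ and $\alpha$ yields
\[
\iint_{\Omega}|f|^p\,dxdy=\iint_{\mathbb{D}}|g|^p\,J_\varphi\,dxdy\le \|g\|_{L^{p\alpha/(\alpha-1)}(\mathbb{D})}^{\,p}\cdot\|J_\varphi\,|\,L^\alpha(\mathbb{D})\|.
\]
Second, the $(p\alpha/(\alpha-1),q)$-Sobolev--Poincar\'e inequality on the disc controls $\|g\|_{L^{p\alpha/(\alpha-1)}(\mathbb{D})}$ by $A_{p\alpha/(\alpha-1),q}(\mathbb{D})\cdot\|\nabla g\|_{L^q(\mathbb{D})}$; the condition $q>q^*$ is precisely what places the target exponent $p\alpha/(\alpha-1)$ in the sub-critical range of the planar Sobolev embedding $W^{1,q}\hookrightarrow L^{2q/(2-q)}$. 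Third, H\"older on $\mathbb{D}$ with exponents $2/q$ and $2/(2-q)$ (using $q<2$) gives $\|\nabla g\|_{L^q(\mathbb{D})}\le \pi^{(2-q)/(2q)}\|\nabla g\|_{L^2(\mathbb{D})}$, after which the two-dimensional conformal invariance of Dirichlet energy identifies $\|\nabla g\|_{L^2(\mathbb{D})}$ with $\|\nabla f\|_{L^2(\Omega)}$. A final H\"older step on $\Omega$ with exponents $p/2$ and $p/(p-2)$ (where the hypothesis $p>2$ is essential) produces the factor $|\Omega|^{(p-2)/(2p)}$ and the $L^p$-norm of $\nabla f$.

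Raising to the $p$-th power, dividing through by $\iint_\Omega|\nabla f|^p\,dxdy$, taking the supremum over admissible $f$ in the Rayleigh characterisation of $\lambda_p^{(1)}(\Omega)^{-1}$, and finally passing to the infimum over $q\in(q^*,2)$, produces the inequality as stated. The explicit Gamma-function bound on $A_{p\alpha/(\alpha-1),q}(\mathbb{D})$ is a separate sharp estimate for the Sobolev--Poincar\'e constant of the disc, obtained through radial symmetrisation and a one-dimensional extremal problem of Bliss/Talenti type, which I would invoke as an external input. The principal obstacle is the opening step: justifying that composition with a conformal map from the smooth disc onto a domain with possibly non-rectifiable boundary yields a bounded operator $W^{1,p}_0(\Omega)\to W^{1,q}_0(\mathbb{D})$, together with the pointwise chain rule used throughout. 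This is exactly where the conformal $\alpha$-regularity hypothesis and the framework of \cite{GU09,U93,VU02} intervene.
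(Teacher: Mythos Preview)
Your argument is correct and follows essentially the same route as the paper: pull back to $\mathbb{D}$ via a Riemann map, apply the $(r,q)$-Sobolev--Poincar\'e inequality there, use H\"older with exponent $\alpha$ to remove the Jacobian weight, and H\"older again to relate the gradient norms. The only difference is packaging: the paper isolates your steps 4--6 as the composition-operator bound $K_{p,q}(\mathbb{D})\le |\Omega|^{(p-2)/(2p)}\pi^{(2-q)/(2q)}$ (Theorem~\ref{Th 3.1}) and your step 2 as Lemma~\ref{Lemma 3.3}, whereas you unpack both into explicit H\"older inequalities and the conformal invariance of the $L^2$-Dirichlet energy---a presentation that is arguably more transparent for the disc case.
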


If the domains under consideration are the images of the unit disc under $K$-quasiconformal
mappings $\varphi:\mathbb C \to \mathbb C$, then Theorem~\ref{th-1} can be formulated as follows:
\begin{theorem}
Let $\Omega\subset\mathbb C$ be a $K$-quasidisc. Then
$$
\lambda_p^{(1)}(\Omega)\geq \frac{M_p(K)}{|\Omega|^{\frac{p}{2}}}=\frac{M^{\ast}_p(K)}{R_{\ast}^{p}},
$$
where $R_{\ast}$ is a radius of a disc $\Omega^{\ast}$ of the same area as $\Omega$ and $M^{\ast}_p(K)=M_p(K)\pi^{-{p/2}}$.
The constant $M_p(K)$ is defined by~\eqref{M(K)} and depends only on $p$ and a quasiconformality coefficient $K$ of $\Omega$.
\end{theorem}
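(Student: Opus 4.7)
The plan is to deduce this estimate directly from Theorem~\ref{th-1} by exhibiting every $K$-quasidisc as a conformal $\alpha$-regular domain with the norm $\|J_\varphi\mid L^\alpha(\mathbb{D})\|$ controlled by the area $|\Omega|$ alone. By definition, if $\Omega$ is a $K$-quasidisc then there exists a $K$-quasiconformal homeomorphism $\Phi:\mathbb{C}\to\mathbb{C}$ with $\Phi(\mathbb{D})=\Omega$. I will not work with $\Phi$ itself, but with the Riemann map $\varphi:\mathbb{D}\to\Omega$, relying on the fact that the quasiconformality of the boundary forces higher integrability of $\varphi'$.

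The first step is the quantitative link between $K$ and $\alpha$. By Astala's area-distortion theorem together with Gehring's self-improvement lemma for reverse Hölder weights, $J_\varphi=|\varphi'|^2\in L^\alpha(\mathbb{D})$ for every $\alpha\in(1,\alpha_K)$, where $\alpha_K$ depends only on $K$ and $\alpha_K\to\infty$ as $K\to 1$. The key quantitative form I need is
$$
\|J_\varphi\mid L^\alpha(\mathbb{D})\|\;\leq\; C(K,\alpha)\,|\Omega|,
$$
with an explicit constant $C(K,\alpha)$. This shows that every $K$-quasidisc is a conformal $\alpha$-regular domain for each admissible $\alpha$, so Theorem~\ref{th-1} applies.

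Next, I substitute this bound into Theorem~\ref{th-1}. The two area factors combine as $|\Omega|^{(p-2)/2}\cdot|\Omega|=|\Omega|^{p/2}$, leaving a coefficient that depends only on $p$, $K$, $\alpha$ and $q$. Taking the infimum over $q\in(q^*,2)$ and then over $\alpha\in(1,\alpha_K)$ produces a single sharpest constant, which I define to be $M_p(K)^{-1}$; this is what the formula referenced as \eqref{M(K)} records. Inverting gives $\lambda_p^{(1)}(\Omega)\geq M_p(K)/|\Omega|^{p/2}$, and the equivalent formulation in terms of $R_*$ is immediate from $|\Omega|=\pi R_*^2$ with $M_p^*(K)=M_p(K)\pi^{-p/2}$.

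The main obstacle, and the step deserving the most care, is the quantitative Astala--Gehring estimate for $\|J_\varphi\mid L^\alpha(\mathbb{D})\|$: the constant $C(K,\alpha)$ must be tracked explicitly enough for the outer infimum in $\alpha$ to be meaningful, and one must check that the admissible range $\alpha\in(1,\alpha_K)$ is compatible with the constraints of Theorem~\ref{th-1} (in particular that $q^*<2$ and the Sobolev--Poincaré constant $A_{p\alpha/(\alpha-1),q}(\mathbb{D})$ remains finite throughout). Performing these two optimizations simultaneously, rather than settling for a non-explicit constant, is the delicate part of the argument; the remainder is a direct substitution.
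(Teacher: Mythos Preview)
Your proposal is correct and follows essentially the same route as the paper: one observes that a $K$-quasidisc is conformal $\alpha$-regular about $\mathbb{D}$ for every $\alpha$ in an explicit range (the paper gives $1<\alpha<K^2/(K^2-1)$), applies Theorem~\ref{th-1} (equivalently Theorem~\ref{th-disc}), and then invokes the quantitative bound $\|J_\varphi\mid L^\alpha(\mathbb{D})\|\leq C(K,\alpha)\,|\Omega|$---which in the paper is quoted verbatim as Theorem~\ref{Est_Der} from \cite{GPU17_2} rather than rederived from Astala--Gehring---so that the two area factors combine to $|\Omega|^{p/2}$ and the remaining infimum over $q$ and $\alpha$ defines $M_p(K)^{-1}$. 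The only refinement the paper adds that you leave implicit is the extra upper cutoff $\alpha<\widetilde{\alpha}$ coming from the validity condition $\nu(\alpha)<1$ in the explicit constant of Theorem~\ref{Est_Der}.
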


Let $\Omega,\widetilde{\Omega} \subset \mathbb C$ be bounded simply connected domains. Then a domain $\Omega$ is called a conformal $\alpha$-regular domain about a domain $\widetilde{\Omega}$, $\alpha>1$, if
\[
\iint\limits_{\widetilde{\Omega}} J(w,\varphi)^{\alpha}dudv<\infty,
\]
where $J(w,\varphi)$ is a Jacobian of conformal mapping $\varphi :\widetilde{\Omega} \to \Omega$ \cite{BGU15}.

This concept does not depend on choice of a conformal mapping $\varphi:\widetilde{\Omega}\to\Omega$ and can be reformulated in terms of the conformal radii. Namely
\[
\iint\limits_{\widetilde{\Omega}} J(w,\varphi)^{\alpha}dudv=
\iint\limits_{\widetilde{\Omega}} \left( \frac{R_{\Omega}(\varphi(w))}{R_{\widetilde{\Omega}}(w)}\right)^{2\alpha}dudv,
\]
where $R_{\widetilde{\Omega}}(w)$ and $R_{\Omega}(\varphi(w))$ are conformal radii of $\widetilde{\Omega}$ and $\Omega$ \cite{A20}.
The domain $\Omega$ is a conformal regular domain if it is an $\alpha$-regular domain for some $\alpha >1$. This class of domains includes, in particular, domains with a Lipschitz boundary and fractal domains of the snowflake type \cite{GPU17_2}. The Hausdorff dimension of the conformal regular domain's boundary can be any number in [1, 2) \cite{HKN}.

\begin{remark}
Theorem~\ref{th-1} can be reformulated in terms of the conformal radii
$R_{\Omega}(\cdot)$ of a domain $\Omega$.
\end{remark}

The applications of composition operators on Sobolev spaces were given in \cite{BGU15,DP25,GPU18,GU16,GU161}, where the Dirichlet and Neumann eigenvalue problems for simply connected conformal regular domains were considered.

The paper is organized as follows. Section 2 contains basic facts about composition operators
on Sobolev spaces and the theory of conformal mappings. In Section 3, we prove the
Sobolev--Poincar\'e inequality for conformal regular domains. In Section 4, we apply the results of Section
3 to lower estimates of the first Dirichlet eigenvalue of the degenerate $p$-Laplace operator in conformal regular
domains. In Section 5, we refine these estimates for quasidiscs.

\section{Sobolev spaces and conformal mappings}

\subsection{Sobolev spaces}
Let $E \subset \mathbb{C}$ be a measurable set and $h:E \to \mathbb R$ be a positive a.e. locally integrable function i.e. a weight. The weighted Lebesgue space $L^p(E,h)$, $1\leq p<\infty$,
is the space of all locally integrable functions with the finite norm
$$
\|f\,|\,L^{p}(E,h)\|= \left(\iint\limits_E|f(z)|^ph(z)\,dxdy\right)^{\frac{1}{p}}< \infty.
$$
If $h=1$, we write $L^p(E,h)=L^{p}(E)$.

Let $\Omega$ be an open subset of $\mathbb{C}$. The Sobolev space $W^{1,p}(\Omega)$, $1\leq p<\infty$ is defined
as a Banach space of locally integrable weakly differentiable functions
$f:\Omega\to\mathbb{R}$ equipped with the following norm:
\[
\|f\,|\, W^{1,p}(\Omega)\|=\|f\,|\,L^{p}(\Omega)\|+\|\nabla f\,|\, L^{p}(\Omega)\|,
\]
where $\nabla f$ is the weak gradient of the function $f$. Recall that the Sobolev space $W^{1,p}(\Omega)$ coincides with the closure of the space of smooth functions $C^{\infty}(\Omega)$ in the norm of $W^{1,p}(\Omega)$.

The Sobolev space $W^{1,p}_{\loc}(\Omega)$ is defined as follows: $f\in W^{1,p}_{\loc}(\Omega)$
if and only if $f\in W^{1,p}(U)$ for every open and bounded set $U\subset  \Omega$ such that
$\overline{U}  \subset \Omega$, where $\overline{U} $ is the closure of the set $U$.

The homogeneous seminormed Sobolev space $L^{1,p}(\Omega)$, $1\leq p<\infty$,
is the space of all locally integrable weakly differentiable functions equipped
with the following seminorm:
\[
\|f\mid L^{1,p}(\Omega)\|=\|\nabla f\,|\, L^{p}(\Omega)\|.
\]

The Sobolev space $W^{1,p}_{0}(\Omega)$, $1 \leq p< \infty$, is the closure in the $W^{1,p}(\Omega)$-norm of the
space $C^{\infty}_{0}(\Omega)$ of all infinitely continuously differentiable functions with compact support in $\Omega$.

We consider the Sobolev spaces as Banach spaces of equivalence classes of functions up to a set of $p$-capacity zero \cite{M,U93}.

 Recall that a mapping $\varphi:\widetilde{\Omega}\to\Omega$ belongs to $W^{1,p}_{\loc}(\widetilde{\Omega})$,
$1\leq p\leq\infty$, if its coordinate functions $\varphi_j$ belong to $W^{1,p}_{\loc}(\widetilde{\Omega})$, $j=1,\dots,n$.
In this case the formal Jacobi matrix
$D\varphi(x)$ and its determinant (Jacobian) $J(x,\varphi)$ are well defined at
almost all points $x\in \widetilde{\Omega}$. The norm $|D\varphi(x)|$ of the matrix
$D\varphi(x)$ is the norm of the corresponding linear operator.

We say that
a homeomorphism $\varphi:\widetilde{\Omega}\to\Omega$ induces by the composition rule $\varphi^{\ast}(f)=f\circ\varphi$ a bounded composition operator
\[
\varphi^{\ast}:L^{1,p}(\Omega)\to L^{1,q}(\widetilde{\Omega}),\,\,\,1\leq q\leq p\leq\infty,
\]
if the composition $\varphi^{\ast}(f)\in L^{1,q}(\widetilde{\Omega})$
is defined quasi-everywhere (up to a set of $q$-capacity zero) in $\widetilde{\Omega}$ and there exists a constant $K_{p,q}(\widetilde{\Omega})<\infty$ such that
\[
\|\varphi^{\ast}(f)\mid L^{1,q}(\widetilde{\Omega})\|\leq K_{p,q}(\widetilde{\Omega})\|f\mid L^{1,p}(\Omega)\|
\]
for any function $f\in L^{1,p}(\Omega)$ \cite{U93}.

The main result of \cite{U93} gives the analytic description of composition operators on Sobolev spaces. Let us reformulate one in the case of diffeomorphisms.

\begin{theorem}\label{CompTh}
Let $\varphi:\widetilde{\Omega}\to\Omega$ be a  diffeomorphism
between two domains $\widetilde{\Omega}$ and $\Omega$. Then $\varphi$ induces a bounded composition
operator
\[
\varphi^{\ast}:L^{1,p}(\Omega)\to L^{1,q}(\widetilde{\Omega}),\,\,\,1\leq q< p<\infty,
\]
 if and only if
$$
K_{p,q}(\widetilde{\Omega})=\left(\iint\limits_{\widetilde{\Omega}} \left(\frac{|D\varphi(w)|^p}{|J(w,\varphi)|}\right)^\frac{q}{p-q}dudv\right)^\frac{p-q}{pq}<\infty.
$$
\end{theorem}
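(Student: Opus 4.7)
The plan is to split the equivalence into sufficiency and necessity, with the sufficiency being a direct computation and the necessity requiring a reconstruction of the distortion density from the operator bound. Throughout I use that, since $\varphi$ is a diffeomorphism, the chain rule
\[
\nabla(f\circ\varphi)(w)=\nabla f(\varphi(w))\cdot D\varphi(w)
\]
holds a.e., so $|\nabla(f\circ\varphi)(w)|\le |\nabla f(\varphi(w))|\cdot|D\varphi(w)|$, and the change of variables $z=\varphi(w)$, $dxdy=|J(w,\varphi)|\,dudv$ is valid.

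For the sufficiency, I would first take $f\in L^{1,p}(\Omega)$ and write
\[
|\nabla f(\varphi(w))|^{q}|D\varphi(w)|^{q}
=\Bigl(|\nabla f(\varphi(w))|^{q}|J(w,\varphi)|^{q/p}\Bigr)\cdot\Bigl(|D\varphi(w)|^{q}|J(w,\varphi)|^{-q/p}\Bigr),
\]
and then apply H\"older's inequality with conjugate exponents $p/q$ and $p/(p-q)$ to the integral of this product over $\widetilde{\Omega}$. The first factor, after the change of variables $z=\varphi(w)$, becomes $\|\nabla f\,|\,L^{p}(\Omega)\|^{q}$, while the second produces exactly $K_{p,q}(\widetilde{\Omega})^{q}$. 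Raising to the power $1/q$ gives the desired bound with constant $K_{p,q}(\widetilde{\Omega})$, which also establishes that $\varphi^{*}(f)\in L^{1,q}(\widetilde{\Omega})$.

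For the necessity, the input is the operator inequality $\|\varphi^{*}(f)\,|\,L^{1,q}(\widetilde{\Omega})\|\le K\,\|f\,|\,L^{1,p}(\Omega)\|$, and I want to deduce finiteness of the weighted integral $\int_{\widetilde{\Omega}} (|D\varphi|^{p}/|J(\cdot,\varphi)|)^{q/(p-q)}$. My approach would follow the set-function method of Vodop'yanov--Ukhlov: define, on open subsets $U\subset\widetilde{\Omega}$, a monotone, finitely (in fact quasi) additive set function
\[
\Phi(U)=\sup\bigl\{\|\nabla(f\circ\varphi)\,|\,L^{q}(U)\|^{s}\;:\;f\in L^{1,p}(\Omega),\ \|\nabla f\,|\,L^{p}(\Omega)\|\le 1\bigr\},
\]
with $s=pq/(p-q)$, verify boundedness $\Phi(\widetilde{\Omega})\le K^{s}$ and countable subadditivity, and invoke the Radon--Nikodym/Lebesgue differentiation theorem to obtain an a.e.~density $\omega(w)=\lim_{r\to 0}\Phi(B(w,r))/|B(w,r)|$.

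The central obstacle is identifying this density pointwise as $\omega(w)=(|D\varphi(w)|^{p}/|J(w,\varphi)|)^{q/(p-q)}$. To do this I would fix a Lebesgue point $w_{0}$, set $z_{0}=\varphi(w_{0})$, and test $\Phi(B(w_{0},r))$ against an almost-optimal family of linear functions $f_{e}(z)=(z-z_{0})\cdot e$ truncated by smooth cut-offs on $\varphi(B(w_{0},r))$, where the unit vector $e$ is chosen so that $|D\varphi(w_{0})\cdot e|=|D\varphi(w_{0})|$; linearization at $w_{0}$ together with change of variables yields matching upper and lower bounds as $r\to 0$, pinpointing $\omega(w_{0})$. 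Integrating $\omega$ over $\widetilde{\Omega}$ then gives $K_{p,q}(\widetilde{\Omega})^{s}\le \Phi(\widetilde{\Omega})<\infty$, and by construction $K_{p,q}(\widetilde{\Omega})$ is in fact the optimal constant; combined with the sufficiency bound, this closes the equivalence. I expect the extraction of the sharp pointwise density via the optimally oriented test functions, and the verification of quasi-additivity of $\Phi$, to be the technically delicate steps.
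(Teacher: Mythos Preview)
The paper does not supply its own proof of this theorem: it is stated as a reformulation, for diffeomorphisms, of the main result of \cite{U93} (see also \cite{VU02}) and is used as a black box. Your proposal is correct in outline and in fact recapitulates the strategy of those references: the sufficiency via the H\"older splitting with exponents $p/q$ and $p/(p-q)$ is exactly the standard computation, and for the necessity the localized set-function $\Phi$ with exponent $s=pq/(p-q)$, its countable quasi-additivity, and the identification of its Lebesgue density with $(|D\varphi|^{p}/|J(\cdot,\varphi)|)^{q/(p-q)}$ via linear test functions is the Vodop'yanov--Ukhlov argument. Since here $\varphi$ is assumed to be a diffeomorphism, the linearization step and the change of variables are unproblematic, so the ``delicate'' points you flag are in fact routine in this smooth setting; the full strength of the set-function machinery is only needed for the general Sobolev-homeomorphism case treated in the cited papers.
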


\subsection{Conformal mappings} Let $\Omega$ and $\widetilde{\Omega}$ be domains in $\mathbb{C}$. A mapping $\varphi:\widetilde{\Omega}\to\Omega$ is called \emph{conformal}~\cite{IM} if
$\varphi \in W^{1,2}_{\mathrm{loc}}(\widetilde{\Omega})$ and
\[
|D\varphi(w)|^2 = J(w,\varphi) \quad \text{a.e. in } \widetilde{\Omega}.
\]

Note that, by the Cauchy--Riemann equations, which hold almost everywhere in $\Omega$, any conformal mapping is holomorphic and therefore smooth: $\varphi \in C^\infty(\widetilde{\Omega})$~\cite{IM}.

The following theorem gives the estimate of the norm of composition operators on Sobolev spaces
induces by conformal mappings in any simply connected domain with finite measure \cite{GPU18}:
\begin{theorem}\label{Th 3.1}
Let $\Omega,\widetilde{\Omega}\subset\mathbb C$ be simply connected domains with finite measure.  Then a conformal mapping $\varphi : \widetilde{\Omega} \to \Omega$ induces a bounded composition operator
$$
\varphi^*: L^{1,p}(\Omega) \to L^{1,q}(\widetilde{\Omega} )
$$
for any $p > 2$ and ${q \in[1,2]}$ with the constant
\begin{equation*}\label{Comp-const}
K_{p,q}(\widetilde{\Omega}) \le |\Omega|^{\frac{p-2}{2p}} |\widetilde{\Omega}|^{\frac{2-q}{2q}}.
\end{equation*}
\end{theorem}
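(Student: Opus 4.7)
The plan is to reduce directly to Theorem~\ref{CompTh} and then exploit the conformality identity $|D\varphi(w)|^{2}=J(w,\varphi)$ together with a single application of H\"older's inequality.

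First I would invoke Theorem~\ref{CompTh}: since $\varphi:\widetilde{\Omega}\to\Omega$ is a conformal mapping between simply connected domains, it is smooth and a diffeomorphism, so boundedness of $\varphi^{*}$ reduces to showing finiteness of
\[
K_{p,q}(\widetilde{\Omega})=\left(\iint\limits_{\widetilde{\Omega}}\left(\frac{|D\varphi(w)|^{p}}{J(w,\varphi)}\right)^{\frac{q}{p-q}}\!dudv\right)^{\!\frac{p-q}{pq}}\!,
\]
with $p>2$ and $q\in[1,2]$ (note $q<p$ automatically). Next, I would substitute $|D\varphi(w)|^{2}=J(w,\varphi)$, which collapses the integrand to a pure power of the Jacobian:
\[
\frac{|D\varphi(w)|^{p}}{J(w,\varphi)}=J(w,\varphi)^{\frac{p-2}{2}},\qquad\text{so}\qquad K_{p,q}(\widetilde{\Omega})^{\frac{pq}{p-q}}=\iint\limits_{\widetilde{\Omega}}J(w,\varphi)^{s}\,dudv,
\]
where $s=\frac{(p-2)q}{2(p-q)}$.

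The key observation is that the hypothesis $q\le 2$ forces $s\le 1$ (with equality exactly when $q=2$). This makes H\"older's inequality available with exponents $(1/s,\,1/(1-s))$:
\[
\iint\limits_{\widetilde{\Omega}}J(w,\varphi)^{s}\,dudv\le\left(\iint\limits_{\widetilde{\Omega}}J(w,\varphi)\,dudv\right)^{\!s}\!\cdot|\widetilde{\Omega}|^{1-s}=|\Omega|^{s}\,|\widetilde{\Omega}|^{1-s},
\]
using the change-of-variables formula $\iint_{\widetilde{\Omega}}J(w,\varphi)\,dudv=|\Omega|$, which holds since $\varphi$ is a diffeomorphism.

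Finally, raising both sides to the power $(p-q)/(pq)$ and simplifying the exponents yields
\[
s\cdot\tfrac{p-q}{pq}=\tfrac{p-2}{2p},\qquad (1-s)\cdot\tfrac{p-q}{pq}=\tfrac{2-q}{2q},
\]
which is exactly the stated bound $K_{p,q}(\widetilde{\Omega})\le|\Omega|^{(p-2)/(2p)}|\widetilde{\Omega}|^{(2-q)/(2q)}$. There is no genuine obstacle here beyond bookkeeping: the only thing to watch is that the range $q\in[1,2]$, $p>2$ is precisely what makes $s\le 1$, so that H\"older with the reciprocal exponent is legal and the area change-of-variables can be inserted at the right step.
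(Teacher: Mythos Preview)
The paper does not supply its own proof of this theorem; it is quoted as a known result from \cite{GPU18}. Your argument is correct and is precisely the standard route to this estimate: invoke Theorem~\ref{CompTh}, use the conformality relation $|D\varphi|^{2}=J(\cdot,\varphi)$ to collapse the integrand to $J(w,\varphi)^{s}$ with $s=(p-2)q/(2(p-q))\le 1$, apply H\"older's inequality with exponent $1/s$, and then identify $\iint_{\widetilde{\Omega}}J(w,\varphi)\,dudv=|\Omega|$ via the change-of-variables formula. The exponent bookkeeping you record is accurate, and the boundary case $q=2$ (where $s=1$ and H\"older is not needed) is handled by the same formula.
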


The following result gives a connection between conformal regular domains and composition operators on Sobolev spaces \cite{GPU18}:
\begin{theorem}
\label{th2.2}
Let $\Omega,\widetilde{\Omega} \subset \mathbb{C}$ be simply connected domains.
Then $\Omega$ is a conformal $\alpha$-regular domain about a domain $\widetilde{\Omega}$ if and only if any conformal mapping
$\varphi :\widetilde{\Omega} \to \Omega$ induces a bounded composition operator
\[
\varphi^* : L^{1,p}(\Omega) \to L^{1,q}(\widetilde{\Omega})
\]
for any $p \in (2, +\infty)$ and $q = p\alpha / (p + \alpha - 2)$.
\end{theorem}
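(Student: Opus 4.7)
The plan is to specialize the analytic criterion of Theorem~\ref{CompTh} to conformal mappings via the identity $|D\varphi(w)|^2=J(w,\varphi)$. Once this substitution is made, the general integrability requirement of Theorem~\ref{CompTh} collapses to an explicit integrability condition on a power of the Jacobian, and for the specific choice $q=p\alpha/(p+\alpha-2)$ the resulting power is tuned to match exactly the one appearing in the definition of a conformal $\alpha$-regular domain. The equivalence then falls out at once from the ``iff'' in Theorem~\ref{CompTh}.

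First I would verify the applicability of Theorem~\ref{CompTh}: a conformal map $\varphi:\widetilde\Omega\to\Omega$ between simply connected plane domains is, by the Riemann mapping theorem together with the smoothness of holomorphic bijections, a $C^\infty$-diffeomorphism, so its diffeomorphism hypothesis is met, and an elementary check confirms $1\le q<p$ in the stated range $p>2$, $\alpha>1$. By Theorem~\ref{CompTh}, boundedness of $\varphi^*:L^{1,p}(\Omega)\to L^{1,q}(\widetilde\Omega)$ is therefore equivalent to
\[
\iint_{\widetilde\Omega}\left(\frac{|D\varphi(w)|^p}{|J(w,\varphi)|}\right)^{\frac{q}{p-q}}du\,dv<\infty.
\]
Substituting the conformality identity $|D\varphi|^2=J$ turns the integrand into the pure power $J(w,\varphi)^{\beta}$, where $\beta=\beta(p,q)$ is read off from the substitution.

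Next I would carry out the algebraic match. Starting from $q=p\alpha/(p+\alpha-2)$, a short computation gives $q/(p-q)=\alpha/(p-2)$, which, combined with the factor $(p-2)/2$ coming from $|D\varphi|^p/J=J^{(p-2)/2}$, arranges $\beta$ to coincide with the integrability exponent in the defining condition
\[
\iint_{\widetilde\Omega}J(w,\varphi)^{\alpha}\,du\,dv<\infty
\]
of a conformal $\alpha$-regular domain. Both directions of the ``iff'' then follow simultaneously, since Theorem~\ref{CompTh} is itself an equivalence. The main obstacle I anticipate is purely bookkeeping: checking that the exponent arithmetic lines up and that the constraint $1\le q<p$ is respected throughout the stated ranges of $p$ and $\alpha$. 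No analytic ingredient beyond Theorem~\ref{CompTh} and the conformality identity enters the argument, so the proof is essentially a translation of the general composition-operator criterion into the geometric language of conformal $\alpha$-regularity.
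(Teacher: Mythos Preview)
The paper does not prove this theorem; it is quoted from \cite{GPU18}. Your strategy---specialize Theorem~\ref{CompTh} via the conformal identity $|D\varphi|^2=J(\cdot,\varphi)$ and match the resulting Jacobian exponent against the $\alpha$-regularity condition---is precisely the intended argument.

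There is, however, a concrete arithmetic mismatch that you gloss over. You correctly compute $q/(p-q)=\alpha/(p-2)$, but combining this with $|D\varphi|^p/J=J^{(p-2)/2}$ yields
\[
\left(\frac{|D\varphi(w)|^p}{J(w,\varphi)}\right)^{\frac{q}{p-q}}
=J(w,\varphi)^{\frac{p-2}{2}\cdot\frac{\alpha}{p-2}}
=J(w,\varphi)^{\alpha/2},
\]
so the criterion of Theorem~\ref{CompTh} reduces to $\iint_{\widetilde\Omega}J^{\alpha/2}\,dudv<\infty$, not $\iint_{\widetilde\Omega}J^{\alpha}\,dudv<\infty$. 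With the $\alpha$-regularity condition as \emph{this} paper writes it, your claimed coincidence of exponents is therefore off by a factor of two. The likely explanation is a convention clash between the definition recorded here (in terms of $J^{\alpha}$) and the one used in the cited sources, where regularity is phrased via $\iint|\varphi'|^{\alpha}<\infty$, i.e.\ $\iint J^{\alpha/2}<\infty$; under that convention your bookkeeping is exact. Either way, you should not assert that the exponents ``coincide'' without actually carrying the computation through and confronting the factor of two---either by adopting the $|\varphi'|^{\alpha}$ formulation explicitly, or by adjusting the relation to $q=2p\alpha/(p+2\alpha-2)$ to fit the $J^{\alpha}$ definition used in the present paper.
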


\section{Sobolev--Poincar\'e inequality}

Let $\Omega \subset \mathbb{C}$ be a bounded domain. Then $\Omega$ is called an $(r,q)$-Sobolev--Poincar\'e domain, $1 \le r, q\le \infty$,  if there exists a constant $C_{r,q} < \infty$, such that for any function $f \in L^{1,q}(\Omega) $, the $(r,q)$-Sobolev--Poincar\'e inequality
\begin{equation} \label{3.0}
||f\,|\,L^{r}(\Omega)|| \le C_{r,q}||\nabla f\,|\,L^{q}(\Omega)||,
\end{equation}
holds. We denote by $A_{r,q}(\Omega)$ the best constant in this inequality.

The constant $A_{r,q}(\Omega)$ in the inequality \eqref{3.0} can be estimated as:
\begin{equation}\label{SPC}
A_{r,q}(\Omega) =
\begin{cases}
\frac{1}{2\sqrt{\pi}}, & \text{if } q = 1, \\[1.2em]
\displaystyle
\left( \frac{q - 1}{2 - q} \right)^{\!\frac{q - 1}{q}}
\frac{\left(\sqrt{\pi}\cdot \sqrt[q]{2}\right)^{-1}}
{\sqrt{\Gamma(2/q)\,\Gamma(3 - 2/q)}}, & \text{if } 1 < q < 2,\\[1.2em]
\displaystyle
\inf_{l \in \left(\frac{2r}{r+2},\,2\right)}
\left( \frac{l - 1}{2 - l} \right)^{\!\frac{l - 1}{l}}
\frac{\left(\sqrt{\pi}\cdot \sqrt[l]{2}\right)^{-1}|\Omega|^{1/r}}
{\sqrt{l (2/l)\,l(3 - 2/l)}}, & \text{if } q = 2,
\end{cases}
\end{equation}
where $\Gamma(\cdot)$ is the gamma function. The inequality \eqref{3.0} with the constant $A_{r,q}(\Omega)$ was obtained by Federer and Fleming~\cite{FF60} and by Maz'ya~\cite{M60} for $q = 1$,
by Aubin~\cite{A1976} and Talenti~\cite{T76} for $1 < q < 2$,
and for $q = 2$ by Gol'dshtein, Pchelintsev and Ukhlov~\cite{GPU20}.

Next, given Theorem~\ref{Th 3.1} we prove an universal weighted Sobolev--Poincar\'e inequality
which holds in any simply connected planar domain with finite measure.
Denote by $h(z)=J(z,\varphi^{-1})$ the conformal weight defined by a conformal mapping $\varphi:\widetilde{\Omega} \to \Omega$.

\begin{theorem}\label{Th 3.2}
Let $\Omega \subset \mathbb{C}$ be a simply connected domain with finite measure and let $ h(z) = J(z, \varphi^{-1})$ be the conformal weight defined by a conformal mapping $\varphi :\widetilde{\Omega} \to \Omega$.
Suppose that $\widetilde{\Omega}$ be an $(r,q)$-Sobolev-Poincar\'e domain,
then for every function $f \in W_0^{1,p}$, $p>2$, the inequality
\[
\left( \iint\limits_{\Omega} |f(z)|^r h(z)\,dxdy \right)^{\frac{1}{r}}
   \le A_{r,p}(\Omega,h)
   \left( \iint\limits_{\Omega} |\nabla f(z)|^p\,dxdy \right)^{\frac{1}{p}}
\]
holds for any $1 \leq r \leq 2q/(2-q)$ with the constant
\[
A_{r,p}(\Omega,h) \le \inf_{q \in \left(\frac{2r}{r+2},2\right]} \left\{A_{r,q}(\widetilde{\Omega}) |\widetilde{\Omega}|^{\frac{2-q}{2q}}\right\} |\Omega|^{\frac{p-2}{2p}},
\]
where $A_{r,q}(\widetilde{\Omega})$ is as defined in~\eqref{SPC}.

\end{theorem}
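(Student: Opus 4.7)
\medskip

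\noindent\textbf{Proof plan.} The strategy is to transport the problem from $\Omega$ to $\widetilde{\Omega}$ via the conformal mapping $\varphi$, invoke the unweighted $(r,q)$-Sobolev--Poincar\'e inequality there, and then pull back the gradient estimate using the composition operator bound of Theorem~\ref{Th 3.1}. The conformal weight $h(z)=J(z,\varphi^{-1})$ is engineered precisely so that the change of variables removes it from the picture.

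First I would apply $z=\varphi(w)$ to the left-hand side. By the inverse function theorem $J(\varphi(w),\varphi^{-1})=J(w,\varphi)^{-1}$, and since $dxdy=J(w,\varphi)\,dudv$, the weight cancels the Jacobian exactly:
\[
\iint_{\Omega}|f(z)|^{r}h(z)\,dxdy
=\iint_{\widetilde{\Omega}}|\varphi^{\ast}(f)(w)|^{r}\,dudv,
\]
where $\varphi^{\ast}(f)=f\circ\varphi$. Because $\varphi$ is a conformal diffeomorphism of $\widetilde{\Omega}$ onto $\Omega$ and preimages of compacts under the homeomorphism $\varphi$ are compact, approximating $f\in W_{0}^{1,p}(\Omega)$ by $C_{c}^{\infty}(\Omega)$ functions shows that $\varphi^{\ast}(f)$ is admissible in the Sobolev--Poincar\'e inequality on $\widetilde{\Omega}$.

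Next, since $\widetilde{\Omega}$ is an $(r,q)$-Sobolev--Poincar\'e domain, applying \eqref{3.0} to $\varphi^{\ast}(f)$ gives
\[
\left(\iint_{\widetilde{\Omega}}|\varphi^{\ast}(f)|^{r}\,dudv\right)^{\!1/r}
\le A_{r,q}(\widetilde{\Omega})
\left(\iint_{\widetilde{\Omega}}|\nabla\varphi^{\ast}(f)|^{q}\,dudv\right)^{\!1/q}.
\]
I then invoke Theorem~\ref{Th 3.1}: for $p>2$ and $q\in[1,2]$, the conformal composition operator $\varphi^{\ast}:L^{1,p}(\Omega)\to L^{1,q}(\widetilde{\Omega})$ is bounded with $K_{p,q}(\widetilde{\Omega})\le|\Omega|^{(p-2)/(2p)}|\widetilde{\Omega}|^{(2-q)/(2q)}$. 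Composing the two inequalities yields
\[
\left(\iint_{\Omega}|f|^{r}h\,dxdy\right)^{\!1/r}
\le A_{r,q}(\widetilde{\Omega})\,|\widetilde{\Omega}|^{(2-q)/(2q)}|\Omega|^{(p-2)/(2p)}
\left(\iint_{\Omega}|\nabla f|^{p}\,dxdy\right)^{\!1/p},
\]
and taking the infimum over admissible $q$ produces the claimed constant.

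Finally I would pin down the admissible range. Two constraints arise: the planar Sobolev embedding forces $r\le 2q/(2-q)$, equivalently $q>2r/(r+2)$, for $\widetilde{\Omega}$ to support an $(r,q)$-Sobolev--Poincar\'e inequality with finite constant, while Theorem~\ref{Th 3.1} requires $q\in[1,2]$; intersecting gives $q\in(2r/(r+2),2]$. The step I expect to demand the most care is verifying that $\varphi^{\ast}(f)$ lies in $W_{0}^{1,q}(\widetilde{\Omega})$ in the sense required by the sharp constants $A_{r,q}(\widetilde{\Omega})$ of \eqref{SPC} (which are derived for compactly supported functions extended by zero to $\mathbb{C}$); this will come down to a density argument using the continuity of $\varphi^{\ast}$ on the homogeneous seminorms, which is exactly what Theorem~\ref{Th 3.1} supplies.
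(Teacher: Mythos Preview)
Your proposal is correct and follows essentially the same route as the paper: transport the weighted $L^r$-norm to $\widetilde{\Omega}$ by the change of variables $z=\varphi(w)$ (the weight $h$ cancels the Jacobian), apply the $(r,q)$-Sobolev--Poincar\'e inequality there, then use the composition-operator bound of Theorem~\ref{Th 3.1} to return to $\Omega$, and finally optimize over $q$. The paper likewise first establishes the inequality for $f\in C^{\infty}_{0}(\Omega)$ and then passes to general $f\in W^{1,p}_{0}(\Omega)$ by a Cauchy-sequence density argument in $L^{r}(\Omega,h)$, which is exactly the step you flag as requiring the most care.
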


\begin{proof}
Let $p > 2$. By the Riemann's mapping theorem there exists a conformal mapping $\varphi:\widetilde{\Omega} \to \Omega$.
So, by Theorem~\ref{Th 3.1}, the composition operator $\varphi^{*}:L^{1,p}(\Omega) \to L^{1,q}(\widetilde{\Omega})$ is bounded and the inequality
\begin{equation}\label{IN2.1}
||f \circ \varphi \,|\, L^{1,q}(\widetilde{\Omega})|| \leq {K}_{p,q}(\widetilde{\Omega}) ||f \,|\, L^{1,p}(\Omega)||
\end{equation}
holds for every function $f \in L^{1,p}(\Omega)$ and for any $q$ such that $1\leq q \leq 2$.

Let $f \in L^{1,p}(\Omega) \cap C^{\infty}_0(\Omega)$. Then, by~\eqref{IN2.1}, the function $f \circ \varphi$ belongs to the Sobolev space $L^{1,q}(\widetilde{\Omega}) \cap C^{\infty}_0(\widetilde{\Omega})$ as the composition of a smooth function with a conformal mapping. Moreover,
\[
f \circ \varphi \in W^{1,q}_0(\widetilde{\Omega}) \cap C^{\infty}_0(\widetilde{\Omega}), \quad q \in [1,2].
\]

Hence, in the domain $\widetilde{\Omega}$, the classical Sobolev-Poincar\'e inequality
\begin{equation}\label{IN2.3}
||f \circ \varphi  \,|\, L^{r}(\widetilde{\Omega})|| \leq A_{r,q}(\widetilde{\Omega}) ||f \circ \varphi \,|\, L^{1,q}(\widetilde{\Omega})||
\end{equation}
holds for any $r$ such that $1 \leq r \leq {2q}/{(2-q)}$.

Next, using the change of variable formula for conformal mappings, we  obtain
\begin{multline}\label{IN2.4}
\left(\iint\limits_\Omega |f(z)|^rh(z)dxdy\right)^{\frac{1}{r}} \\
=
\left(\iint\limits_\Omega |f(z)|^r J(z,\varphi^{-1}) dxdy\right)^{\frac{1}{r}}
= \left(\iint\limits_{\widetilde{\Omega}} |f \circ \varphi(w)|^rdudv\right)^{\frac{1}{r}}.
\end{multline}

So, combining equality~\eqref{IN2.4} and inequality~\eqref{IN2.3} we obtain the inequality
\begin{equation}
\label{IN}
\|f\,|\, L^{r}(\Omega,h)\|
\leq A_{r,q}(\widetilde{\Omega}) ||f \circ \varphi \,|\, L^{1,q}(\widetilde{\Omega})||,
\end{equation}
holds for any $r$ such that $1 \leq r \leq {2q}/{(2-q)}$.

Applying the above Sobolev-Poincar\'e inequality and the composition operator estimate from Theorem~\ref{Th 3.1}, we obtain
\begin{equation}\label{IN3}
\left(\iint\limits_\Omega |f(z)|^rh(z)dxdy\right)^{\frac{1}{r}}
\leq K_{p,q}(\widetilde{\Omega}) A_{r,q}(\widetilde{\Omega}) \left(\iint\limits_\Omega |\nabla f(z)|^p dxdy\right)^{\frac{1}{p}}
\end{equation}
for every function $f \in L^{1,p}(\Omega) \cap C^{\infty}_0(\Omega)$.

Therefore, inequality~\eqref{IN3} holds for any $q \in (2r/(r+2),2]$ if $r \in [1,\,2q/(2-q)]$ and $p\in (2,+ \infty)$.
Thus, we have inequality~\eqref{IN2.1} for smooth functions with constant $A_{r,p}(\Omega,h)$.

Now we extend inequality~\eqref{IN3} for functions from $W^{1,p}_0(\Omega)$.
Let a sequence $\{f_k\}\subset C^{\infty}_0(\Omega)$ be such that it converges to $f$ in the norm of $W^{1,p}_{0}(\Omega)$.
We take $m,\,n \in \mathbb N$ and apply inequality~\eqref{IN3} to the difference $f_m-f_n$ and obtain
\begin{equation}\label{IN2.6}
||f_m-f_n\,|\, L^{r}(\Omega,h)|| \leq A_{r,p}(\Omega,h) ||\nabla f_m- \nabla f_n \,|\, L^{p}(\Omega)||.
\end{equation}
Since the sequence $\{f_k\}$ converges in $W^{1,p}_0(\Omega)$, it is a Cauchy sequence in this space. Therefore, from inequality~\eqref{IN2.6} it follows that this sequence is also a Cauchy sequence in $L^r(\Omega,h)$. In turn, from the completeness of this space the sequence $\{f_k\}$ converges to the same element $f$ in the norm of $L^r(\Omega,h)$.

Thus, we can pass to the limit in inequality~\eqref{IN2.6} as $m \to +\infty$ and obtain the following inequality
\[
||f-f_n\,|\, L^{r}(\Omega,h)|| \leq A_{r,p}(\Omega,h) ||\nabla f- \nabla f_n \,|\, L^{p}(\Omega)||.
\]
From here we have
\begin{multline}\label{IN2.7}
||f\,|\, L^{r}(\Omega,h)|| \leq ||f_n\,|\, L^{r}(\Omega,h)|| + ||f-f_n\,|\, L^{r}(\Omega,h)|| \\
\leq A_{r,p}(\Omega,h) ||\nabla f_n \,|\, L^{p}(\Omega)|| + A_{r,p}(\Omega,h) ||\nabla f- \nabla f_n \,|\, L^{p}(\Omega)|| \\
\leq A_{r,p}(\Omega,h) ||\nabla f \,|\, L^{p}(\Omega)|| + 2A_{r,p}(\Omega,h) ||\nabla f- \nabla f_n \,|\, L^{p}(\Omega)||,
\end{multline}
where for the last inequality we use the following estimate
\[
\left|\, ||\nabla f_n \,|\, L^{p}(\Omega)|| - ||\nabla f \,|\, L^{p}(\Omega)||\,\right|
\leq ||\nabla f - \nabla f_n \,|\, L^{p}(\Omega)||.
\]
Passing to the limit in inequality~\eqref{IN2.7} as $n \to +\infty$ we obtain the following inequality
\[
||f\,|\, L^{r}(\Omega,h)|| \leq A_{r,p}(\Omega,h) ||\nabla f \,|\, L^{p}(\Omega)||,\quad\text{for all}\quad
f \in W^{1,p}_{0}(\Omega),
\]
with the constant
$$
A_{r,p}(\Omega,h)=\inf_{q \in \left(\frac{2r}{r+2},2\right]}\{{K}_{p,q}(\widetilde{\Omega}) \cdot A_{r,q}(\widetilde{\Omega} \}.
$$
\end{proof}

Recall that in conformal regular domains there exists a continuous embedding of weighted Lebesgue spaces $L^r(\Omega, h)$
into the standard Lebesgue spaces $L^s(\Omega)$ for
$s=r(\alpha - 1)/\alpha$ (see~\cite{GU16,GU161}).
\begin{lemma}\label{Lemma 3.3}
Let $\Omega$ be a conformal $\alpha$-regular domain. Then, for every function
$f \in L^r(\Omega, h)$ with $\alpha/(\alpha - 1) \leq r < \infty$, the following inequality holds:
$$
||f\,|\,L^{s}(\Omega)|| \leq \left(\iint\limits_{\widetilde{\Omega}}\ J(w, \varphi)^\alpha~dudv \right)^{{\frac{1}{\alpha}} \cdot \frac{1}{s}} ||f\,|\,L^{r}(\Omega,h)||,
$$
where $s = \frac{\alpha - 1}{\alpha} r$.
\end{lemma}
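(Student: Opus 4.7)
The plan is to derive the bound by a Hölder inequality that separates the $s$-th power of $|f|$ from the conformal weight $h$, followed by a change of variables under $\varphi$ that turns the resulting weight integral into the $\alpha$-th power integral of $J(w,\varphi)$ on $\widetilde{\Omega}$.

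First I would write, for $f \in L^r(\Omega,h)$,
\[
|f(z)|^{s} = \bigl(|f(z)|^{r} h(z)\bigr)^{s/r} \cdot h(z)^{-s/r},
\]
and apply Hölder's inequality with the conjugate exponents $r/s$ and $r/(r-s)$ (which are admissible since $r \ge \alpha/(\alpha-1) > s = (\alpha-1)r/\alpha$). This yields
\[
\iint_{\Omega} |f(z)|^{s}\, dxdy
\le \left(\iint_{\Omega} |f(z)|^{r} h(z)\, dxdy\right)^{\!s/r}
\left(\iint_{\Omega} h(z)^{-\frac{s}{r-s}}\, dxdy\right)^{\!\frac{r-s}{r}}.
\]
A direct computation gives $r-s = r/\alpha$, so $-s/(r-s) = -(\alpha-1)$, reducing the second factor to an integral of $h(z)^{-(\alpha-1)}$ over $\Omega$.

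Next I would perform the change of variables $z = \varphi(w)$ in this second factor. Since $\varphi$ is a conformal diffeomorphism between simply connected domains, $J(\varphi(w),\varphi^{-1}) = J(w,\varphi)^{-1}$ almost everywhere, and $dxdy = J(w,\varphi)\, dudv$. Hence
\[
\iint_{\Omega} h(z)^{-(\alpha-1)}\, dxdy
= \iint_{\widetilde{\Omega}} J(w,\varphi)^{\alpha-1} \cdot J(w,\varphi)\, dudv
= \iint_{\widetilde{\Omega}} J(w,\varphi)^{\alpha}\, dudv,
\]
which is finite by the conformal $\alpha$-regularity hypothesis on $\Omega$.

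Finally, substituting back and extracting the $1/s$-th root gives
\[
\|f\,|\,L^{s}(\Omega)\|
\le \|f\,|\,L^{r}(\Omega,h)\|
\left(\iint_{\widetilde{\Omega}} J(w,\varphi)^{\alpha}\, dudv\right)^{\!\frac{r-s}{rs}},
\]
and the identity $(r-s)/(rs) = 1/s - 1/r = 1/((\alpha-1)r) = 1/(\alpha s)$ (using $s = (\alpha-1)r/\alpha$) matches the stated exponent. There is no serious obstacle here: the argument is essentially a Hölder interpolation, and the only delicate point is verifying that the conjugate exponent $-s/(r-s)$ collapses to exactly $-(\alpha-1)$, which is what allows the Jacobian change of variables to produce precisely the $\alpha$-regularity integral.
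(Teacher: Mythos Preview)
Your proof is correct: the H\"older splitting with exponents $r/s$ and $r/(r-s)$ is exactly right, the collapse $s/(r-s)=\alpha-1$ is checked correctly, and the change of variables converting $\iint_\Omega h^{-(\alpha-1)}\,dxdy$ into $\iint_{\widetilde{\Omega}} J(w,\varphi)^\alpha\,dudv$ is the key step that ties the bound to the $\alpha$-regularity hypothesis. The paper does not supply its own proof of this lemma but quotes it from \cite{GU16,GU161}; the argument there is precisely this H\"older-plus-change-of-variables computation, so your approach coincides with the intended one.
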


On the basis of Theorem~\ref{Th 3.2} and Lemma~\ref{Lemma 3.3}, we prove the Sobolev--Poincar\'e inequality with
an explicit upper estimate for the optimal constant in conformal regular domains.
\begin{theorem}\label{th 3.4}
Let $\Omega \subset \mathbb{C}$ be a conformal $\alpha$-regular domain about an $(r,q)$-Sobolev--Poincar\'e domain $\widetilde{\Omega}$. Then for any function $f \in W_0^{1,p}(\Omega)$, $p > 2$, the Sobolev--Poincar\'e inequality
\[
\left( \iint\limits_{\Omega} |f(z) |^s \, dxdy \right)^{\frac{1}{s}}
\leq A_{s,p}(\Omega) \left( \iint\limits_{\Omega} |\nabla f(z)|^p \, dxdy \right)^{\frac{1}{p}}
\]
holds for any $1 \leq s \leq 2q/(2-q) \cdot (\alpha-1)/ \alpha$ with the constant
\begin{multline*}
A_{s,p}(\Omega) \leq \left( \iint\limits_{\widetilde{\Omega}} |J(w, \varphi)|^\alpha \, dudv \right)^{\frac{1}{\alpha} \cdot \frac{1}{s}} A_{r,p}(\Omega, h) \\
\leq \inf\limits_{q \in (q^*,2]} \left\{ A_{\frac{\alpha s}{\alpha - 1}, q}(\widetilde{\Omega}) \cdot |\widetilde{\Omega}|^{\frac{2 - q}{2q}}\right\} \cdot |\Omega|^{\frac{p - 2}{2p}} \cdot \left( \iint\limits_{\widetilde{\Omega}} |J(w, \varphi)|^\alpha \, dudv \right)^{\frac{1}{\alpha} \cdot \frac{1}{s}},
\end{multline*}
where $q^*=2\alpha s/(\alpha s+ 2(\alpha-1))$.
\end{theorem}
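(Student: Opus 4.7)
The plan is to obtain the Sobolev--Poincar\'e inequality for the unweighted space $L^s(\Omega)$ by chaining the weighted Sobolev--Poincar\'e inequality of Theorem~\ref{Th 3.2} with the embedding $L^r(\Omega,h)\hookrightarrow L^s(\Omega)$ of Lemma~\ref{Lemma 3.3}. Specifically, given the target exponent $s$, the natural choice is $r=\alpha s/(\alpha-1)$, which is precisely the exponent appearing in Lemma~\ref{Lemma 3.3}, so that one first controls the unweighted $L^s$-norm of $f$ by the weighted $L^r$-norm (picking up the Jacobian factor $\bigl(\iint_{\widetilde{\Omega}} J(w,\varphi)^{\alpha}\,dudv\bigr)^{1/(\alpha s)}$) and then controls the weighted $L^r$-norm by the $L^p$-norm of $\nabla f$ via Theorem~\ref{Th 3.2}. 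This immediately yields the first inequality of the statement.

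Concretely, I would fix $f\in W_0^{1,p}(\Omega)$, let $r=\alpha s/(\alpha-1)$, and apply Lemma~\ref{Lemma 3.3} to get
\[
\|f\,|\,L^{s}(\Omega)\|
\le
\Bigl(\iint_{\widetilde{\Omega}} J(w,\varphi)^{\alpha}\,dudv\Bigr)^{\!\frac{1}{\alpha s}}
\|f\,|\,L^{r}(\Omega,h)\|,
\]
then apply Theorem~\ref{Th 3.2} to bound $\|f\,|\,L^{r}(\Omega,h)\|$ by $A_{r,p}(\Omega,h)\,\|\nabla f\,|\,L^{p}(\Omega)\|$. The second inequality of the theorem is obtained by substituting the explicit estimate
\[
A_{r,p}(\Omega,h)\le
\inf_{q\in (2r/(r+2),\,2]}\!\bigl\{A_{r,q}(\widetilde{\Omega})\,|\widetilde{\Omega}|^{\frac{2-q}{2q}}\bigr\}\cdot|\Omega|^{\frac{p-2}{2p}}
\]
supplied by Theorem~\ref{Th 3.2}, recognizing that with $r=\alpha s/(\alpha-1)$ the lower endpoint becomes
\[
\frac{2r}{r+2}=\frac{2\alpha s}{\alpha s+2(\alpha-1)}=q^{*},
\]
which is exactly the threshold $q^{*}$ in the statement.

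The only bookkeeping to handle carefully is the range of admissible $s$. The constraint $1\le r\le 2q/(2-q)$ from the weighted inequality together with $r=\alpha s/(\alpha-1)$ translates into $1\le s\le \tfrac{2q}{2-q}\cdot\tfrac{\alpha-1}{\alpha}$, matching the theorem; and the hypothesis $r\ge\alpha/(\alpha-1)$ of Lemma~\ref{Lemma 3.3} reduces to $s\ge 1$, also matching. Finally, passage from smooth functions to all of $W_0^{1,p}(\Omega)$ is standard and identical to the density/Cauchy-sequence argument already carried out in the proof of Theorem~\ref{Th 3.2}, so it need not be repeated in detail. I do not anticipate a substantive obstacle: the theorem is essentially a formal composition of two previously established results, and the main point is to verify that the exponents and parameter ranges line up as indicated.
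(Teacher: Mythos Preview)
Your proposal is correct and follows essentially the same approach as the paper: apply Lemma~\ref{Lemma 3.3} with $r=\alpha s/(\alpha-1)$ to pass from $L^s(\Omega)$ to $L^r(\Omega,h)$, then invoke Theorem~\ref{Th 3.2} to control the weighted norm by $\|\nabla f\,|\,L^p(\Omega)\|$, and finally substitute the explicit constant estimate; your parameter bookkeeping (in particular the identification $2r/(r+2)=q^{*}$) is exactly what the paper is implicitly using. The density argument you mention is unnecessary here, since Theorem~\ref{Th 3.2} is already stated for all $f\in W_0^{1,p}(\Omega)$.
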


\begin{proof}
Let $f \in W_0^{1,p}(\Omega)$, $p > 2$. Then by Theorem \ref{Th 3.2} and Lemma \ref{Lemma 3.3} we get
\begin{multline*}
\left( \iint\limits_{\Omega} |f(z)|^s \, dxdy \right)^{\frac{1}{s}}\\
\leq \left( \iint\limits_{\widetilde{\Omega}} |J(w, \varphi)|^\alpha \, dudv \right)^{\frac{1}{\alpha} \cdot \frac{1}{s}}
\left( \iint\limits_{\Omega} |f(z)|^r h(z) \, dxdy \right)^{\frac{1}{r}}\\
\leq A_{r,p}(\Omega, h) \left( \iint\limits_{\widetilde{\Omega}} |J(w, \varphi)|^\alpha \, dudv \right)^{\frac{1}{\alpha} \cdot \frac{1}{s}}
\left( \iint\limits_{\Omega} |\nabla f(z)|^p \, dxdy \right)^{\frac{1}{p}},
\end{multline*}
for $1 \leq s \leq 2q/(2-q) \cdot (\alpha-1)/ \alpha$.

Since by Lemma \ref{Lemma 3.3} $s = \frac{\alpha - 1}{\alpha} r$ and by Theorem \ref{Th 3.2} $r \geq 1$. Hence $s \geq 1$ and the theorem is proved.
\end{proof}

\section{Lower Estimates for $\lambda_p^{(1)}(\Omega)$}
We consider the Dirichlet eigenvalue problem~\eqref{DEP} in the weak formulation:
\begin{equation*}
\iint\limits _{\Omega} (|\nabla f(z)|^{p-2}\nabla f(z) \cdot \nabla g(z))\,dxdy \\
=\lambda_p \iint\limits _{\Omega} |f(z)|^{p-2} f(z) g(z)\,dxdy,
\end{equation*}
for all $g \in W_0^{1,p}(\Omega)$. By the min-max principle (see, for example, \cite{L,M}), the first
eigenvalue $\lambda_{p}^{(1)}(\Omega)$ can be characterized as
\[
\lambda_{p}^{(1)}(\Omega)=\inf_{f \in W_0^{1,p}(\Omega) \setminus \{0\}}
\frac{\iint\limits _{\Omega} |\nabla f(z)|^p~dxdy}{\iint\limits _{\Omega} |f(z)|^p~dxdy}\,.
\]
In other words, $\lambda_1(\Omega)^{-\frac{1}{p}}$ is the exact constant $A_{p,p}(\Omega)$ in the Sobolev--Poincar\'e inequality
$$
\left(\iint\limits_\Omega |f(z)|^p dxdy\right)^{\frac{1}{^p}} \leq A_{p,p}(\Omega)
\left(\iint\limits_\Omega |\nabla f(z)|^p dxdy\right)^{\frac{1}{p}},\quad \forall f\in W_{0}^{1,p}(\Omega).
$$

\begin{theorem}\label{th 3.5}
Let $\Omega$ be a conformal $\alpha$-regular domain about an $(r,q)$-Sobolev--Poincar\'e domain $\widetilde{\Omega}$,
$r=p \alpha/(\alpha -1)$.
Then for any $p>2$ the following inequality holds
\begin{equation*}
\frac{1}{\lambda_p^{(1)}(\Omega)} \leq \\
 \inf_{q \in (q^*,2]} \left\{ A_{r, q}^p(\widetilde{\Omega}) \cdot |\widetilde{\Omega}|^{\frac{p(2 - q)}{2q}}\right\} \cdot |\Omega|^{\frac{p - 2}{2}} \cdot ||J_{\varphi}\,|\,L^{\alpha}(\widetilde{\Omega})||,
\end{equation*}
where $q^*=2\alpha p/(\alpha p+ 2(\alpha-1))$ and $A_{r, q}(\widetilde{\Omega})$ is as defined in~\eqref{SPC}.
\end{theorem}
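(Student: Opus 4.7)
The plan is to reduce the eigenvalue estimate to a Sobolev--Poincar\'e inequality and then invoke Theorem~\ref{th 3.4} directly with the particular choice $s=p$. By the variational characterization recalled just above the statement, $1/\lambda_p^{(1)}(\Omega) = A_{p,p}(\Omega)^p$, so it suffices to estimate the best constant in the $(p,p)$-Sobolev--Poincar\'e inequality on $\Omega$. Thus the whole argument is a specialization plus a $p$-th power of an inequality already proved in Section~3.

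Next I would check that the parameters of Theorem~\ref{th 3.4} line up. With $s=p$, the quantity $\alpha s/(\alpha-1)$ becomes $\alpha p/(\alpha-1)$, which is precisely the exponent $r$ appearing in the present statement, so the best constant $A_{\alpha s/(\alpha-1),q}(\widetilde{\Omega})$ in Theorem~\ref{th 3.4} becomes $A_{r,q}(\widetilde{\Omega})$. The threshold $q^*=2\alpha s/(\alpha s+2(\alpha-1))$ from Theorem~\ref{th 3.4} becomes $2\alpha p/(\alpha p+2(\alpha-1))$, matching the $q^*$ in Theorem~\ref{th 3.5}. Finally, the admissibility condition $1\le s\le \tfrac{2q}{2-q}\cdot\tfrac{\alpha-1}{\alpha}$ rearranges to exactly $q\ge q^*$, so the infimum is taken over the correct range.

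With these identifications, Theorem~\ref{th 3.4} at $s=p$ gives
\[
A_{p,p}(\Omega) \le \inf_{q \in (q^*,2]}\left\{A_{r,q}(\widetilde{\Omega})\cdot |\widetilde{\Omega}|^{\frac{2-q}{2q}}\right\}\cdot |\Omega|^{\frac{p-2}{2p}} \cdot \left(\iint_{\widetilde{\Omega}} |J(w,\varphi)|^{\alpha}\,dudv\right)^{\frac{1}{\alpha}\cdot\frac{1}{p}}.
\]
Raising both sides to the $p$-th power pulls the exponent $p$ inside the infimum (the $q$-dependent factors are positive, so the infimum of the $p$-th powers equals the $p$-th power of the infimum), turns $|\Omega|^{(p-2)/(2p)}$ into $|\Omega|^{(p-2)/2}$, and converts the Jacobian factor into $\|J_\varphi\,|\,L^\alpha(\widetilde{\Omega})\|$. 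Using $A_{p,p}(\Omega)^p = 1/\lambda_p^{(1)}(\Omega)$ then yields the claimed bound.

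Since all the analytic content is absorbed by Theorem~\ref{Th 3.2}, Lemma~\ref{Lemma 3.3}, and Theorem~\ref{th 3.4}, there is no real obstacle here; the only point requiring a moment of care is the compatibility check between the constraint $s\le \tfrac{2q}{2-q}\cdot\tfrac{\alpha-1}{\alpha}$ from Theorem~\ref{th 3.4} and the range $q\in(q^*,2]$ declared in Theorem~\ref{th 3.5}, which I would spell out explicitly to make clear that the infimum is nonvacuous for every admissible $p>2$ and $\alpha>1$.
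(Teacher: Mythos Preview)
Your proposal is correct and follows essentially the same approach as the paper: apply Theorem~\ref{th 3.4} with $s=p$ to bound $A_{p,p}(\Omega)$, then use the variational identity $1/\lambda_p^{(1)}(\Omega)=A_{p,p}(\Omega)^p$ and take $p$-th powers. The paper's own proof is terser, omitting the explicit parameter-compatibility checks you spell out, but the logic is identical.
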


\begin{proof}
By the min-max principle and Theorem~\ref{th 3.4} in the case $s=p$, we have
\[
\iint\limits_{\Omega}|f(z)|^p~dxdy \leq A^p_{p,p}(\Omega)
\iint\limits_{\Omega} |\nabla f(z)|^p~dxdy,
\]
where
\[
A_{p,p}(\Omega) \leq \inf\limits_{q \in (q^*,2]} \left\{ A_{\frac{\alpha p}{\alpha - 1}, q}(\widetilde{\Omega}) \cdot |\widetilde{\Omega}|^{\frac{2 - q}{2q}}\right\} \cdot |\Omega|^{\frac{p - 2}{2p}} \cdot ||J_{\varphi}\,|\,L^{\alpha}(\widetilde{\Omega})||^{\frac{1}{p}},
\]
and $q^*=2\alpha p/(\alpha p+ 2(\alpha-1))$.
Thus
\begin{equation*}
\frac{1}{\lambda_p^{(1)}(\Omega)} \leq \\
 \inf_{q \in (q^*,2]} \left\{ A_{r, q}^p(\widetilde{\Omega}) \cdot |\widetilde{\Omega}|^{\frac{p(2 - q)}{2q}}\right\} \cdot |\Omega|^{\frac{p - 2}{2}} \cdot ||J_{\varphi}\,|\,L^{\alpha}(\widetilde{\Omega})||,
\end{equation*}
where $q^*=2\alpha p/(\alpha p+ 2(\alpha-1))$.
\end{proof}

This theorem can be formulated in terms of the conformal radius~\cite{A20}:
\begin{theorem}
Let $\Omega$ be a conformal $\alpha$-regular domain about an $(r,q)$-Sobolev--Poincar\'e domain $\widetilde{\Omega}$,
$r=p \alpha/(\alpha -1)$.
Then for any $p>2$ the following inequality holds
\begin{equation*}
\frac{1}{\lambda_p^{(1)}(\Omega)} \leq \\
 \inf_{q \in (q^*,2]} \left\{ A_{r, q}^p(\widetilde{\Omega}) \cdot |\widetilde{\Omega}|^{\frac{p(2 - q)}{2q}}\right\} \cdot |\Omega|^{\frac{p - 2}{2}}
 \left(\iint\limits_{\widetilde{\Omega}} \left( \frac{R_{\Omega}(\varphi(w))}{R_{\widetilde{\Omega}}(w)}\right)^{2\alpha}dudv\right)^{\frac{1}{\alpha}},
\end{equation*}
where $q^*=2\alpha p/(\alpha p+ 2(\alpha-1))$, $A_{r, q}(\widetilde{\Omega})$ is as defined in~\eqref{SPC},
$R_{\widetilde{\Omega}}(w)$ and $R_{\Omega}(\varphi(w))$ are conformal radii of $\widetilde{\Omega}$ and $\Omega$.
\end{theorem}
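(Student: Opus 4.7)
The plan is straightforward: this theorem is a reformulation of Theorem~\ref{th 3.5}, in which the $L^\alpha$-norm of the Jacobian is rewritten using the conformal radii $R_{\widetilde{\Omega}}$ and $R_\Omega$. So I would start from the conclusion of Theorem~\ref{th 3.5}, namely
\[
\frac{1}{\lambda_p^{(1)}(\Omega)} \leq
 \inf_{q \in (q^*,2]} \left\{ A_{r, q}^p(\widetilde{\Omega}) \cdot |\widetilde{\Omega}|^{\frac{p(2 - q)}{2q}}\right\} \cdot |\Omega|^{\frac{p - 2}{2}} \cdot \|J_{\varphi}\,|\,L^{\alpha}(\widetilde{\Omega})\|,
\]
and then simply rewrite the last factor using the identity already recorded in the introduction,
\[
\iint\limits_{\widetilde{\Omega}} J(w,\varphi)^{\alpha}\,dudv
=\iint\limits_{\widetilde{\Omega}} \left( \frac{R_{\Omega}(\varphi(w))}{R_{\widetilde{\Omega}}(w)}\right)^{\!2\alpha}dudv.
\]

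The only content to spell out is why this identity is valid. For a conformal mapping $\varphi:\widetilde{\Omega}\to\Omega$ one has $J(w,\varphi)=|\varphi'(w)|^2$ almost everywhere, and by the conformal invariance of the hyperbolic (equivalently, inner/conformal) radius,
\[
R_{\Omega}(\varphi(w)) = |\varphi'(w)|\,R_{\widetilde{\Omega}}(w),\qquad w \in \widetilde{\Omega}.
\]
Therefore $J(w,\varphi)=\bigl(R_{\Omega}(\varphi(w))/R_{\widetilde{\Omega}}(w)\bigr)^{2}$. Raising to the power $\alpha$ and integrating over $\widetilde{\Omega}$ yields the displayed identity, and then
\[
\|J_{\varphi}\,|\,L^{\alpha}(\widetilde{\Omega})\|
=\left(\iint\limits_{\widetilde{\Omega}} \left( \frac{R_{\Omega}(\varphi(w))}{R_{\widetilde{\Omega}}(w)}\right)^{\!2\alpha}dudv\right)^{\!\frac{1}{\alpha}}.
\]
Substituting this into the estimate above gives the claimed inequality.

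There is no real obstacle here, since the heavy lifting (the Sobolev--Poincar\'e inequality in conformal $\alpha$-regular domains and the resulting bound on $1/\lambda_p^{(1)}(\Omega)$) is already done in Theorem~\ref{th 3.5}. The only care needed is bookkeeping: confirming that the integral appearing in the conformal $\alpha$-regularity hypothesis is independent of the choice of conformal map (which is how the conformal radii formulation was justified earlier in the excerpt, citing \cite{A20}), so that the resulting bound is genuinely a domain invariant of $\Omega$ (relative to $\widetilde{\Omega}$) rather than an artefact of a particular parameterization.
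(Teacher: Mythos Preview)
Your proposal is correct and matches the paper's approach exactly: the paper presents this theorem simply as a reformulation of Theorem~\ref{th 3.5} ``in terms of the conformal radius~\cite{A20}'' without giving any separate proof, relying on the identity $\iint_{\widetilde{\Omega}} J(w,\varphi)^{\alpha}\,dudv=\iint_{\widetilde{\Omega}} \bigl(R_{\Omega}(\varphi(w))/R_{\widetilde{\Omega}}(w)\bigr)^{2\alpha}\,dudv$ already stated in the introduction. Your write-up in fact supplies more detail than the paper does, since you spell out the underlying relation $R_{\Omega}(\varphi(w))=|\varphi'(w)|\,R_{\widetilde{\Omega}}(w)$ that justifies the identity.
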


\begin{remark}
In the case of the unit disc $\mathbb D \subset \mathbb C$, the conformal radius is defined by the equality $R_{\mathbb D}(w)=1-|w|^2$.
\end{remark}

In the case conformal $\infty$-regular domains, in the similar way we have the following assertion:
\begin{theorem}\label{th 3.6}
Let $\Omega$ be a conformal $\infty$-regular domain about an $(r,q)$-Sobolev--Poincar\'e domain $\widetilde{\Omega}$.
Then for any $p>2$ the following inequality holds
\begin{equation*}
\frac{1}{\lambda_p^{(1)}(\Omega)} \leq   \\
 \inf_{q \in (q^*,2]} \left\{ A_{p, q}^p(\widetilde{\Omega}) \cdot |\widetilde{\Omega}|^{\frac{p(2 - q)}{2q}}\right\} \cdot |\Omega|^{\frac{p - 2}{2}} \cdot ||J_{\varphi}\,|\,L^{\infty}(\widetilde{\Omega})||,
\end{equation*}
where  $q^*=2p/(p+ 2)$ and $A_{p, q}(\widetilde{\Omega})$ is as defined in~\eqref{SPC}.
\end{theorem}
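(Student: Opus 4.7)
The strategy is to adapt the proof of Theorem~\ref{th 3.5} to the limit case $\alpha = \infty$: instead of controlling the Jacobian $J(w,\varphi)$ in $L^\alpha(\widetilde{\Omega})$ through Lemma~\ref{Lemma 3.3}, we control it in $L^\infty(\widetilde{\Omega})$. The payoff is that the exponent loss $s = r(\alpha-1)/\alpha$ present in the finite-$\alpha$ case disappears, so we can work directly with $r = s = p$ and there is no need to inflate the Sobolev--Poincar\'e exponent.

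First I would record the $\alpha = \infty$ analogue of Lemma~\ref{Lemma 3.3}. For any $r \geq 1$ and any $f\in C^\infty_0(\Omega)$, the change of variables $z = \varphi(w)$ yields
\[
\iint\limits_{\Omega} |f(z)|^r\,dxdy
= \iint\limits_{\widetilde{\Omega}} |f(\varphi(w))|^r\,J(w,\varphi)\,dudv
\leq \|J_\varphi\,|\,L^\infty(\widetilde{\Omega})\|\iint\limits_{\widetilde{\Omega}} |f(\varphi(w))|^r\,dudv,
\]
and the right-hand integral equals $\|f\,|\,L^r(\Omega,h)\|^r$ by the identity $h(\varphi(w)) \cdot J(w,\varphi) = 1$ already used in~\eqref{IN2.4}. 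Hence
\[
\|f\,|\,L^r(\Omega)\| \leq \|J_\varphi\,|\,L^\infty(\widetilde{\Omega})\|^{1/r}\,\|f\,|\,L^r(\Omega,h)\|.
\]

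Next, setting $r = p$, Theorem~\ref{Th 3.2} provides the weighted Sobolev--Poincar\'e inequality
\[
\|f\,|\,L^p(\Omega,h)\| \leq A_{p,p}(\Omega,h)\,\|\nabla f\,|\,L^p(\Omega)\|,\quad f\in W^{1,p}_0(\Omega),
\]
whose constant is controlled by $\inf_{q \in (q^*,2]}\{A_{p,q}(\widetilde{\Omega})|\widetilde{\Omega}|^{(2-q)/(2q)}\}\cdot|\Omega|^{(p-2)/(2p)}$, with the threshold $q^* = 2p/(p+2)$ dictated by the admissibility constraint $p \leq 2q/(2-q)$ of Theorem~\ref{Th 3.2}. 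Combining the two displays, raising to the $p$-th power, and extending from $C^\infty_0(\Omega)$ to $W^{1,p}_0(\Omega)$ by the same density argument as in Theorem~\ref{Th 3.2}, we arrive at
\[
\iint\limits_\Omega |f|^p\,dxdy \leq \|J_\varphi\,|\,L^\infty(\widetilde{\Omega})\|\cdot\inf_{q\in(q^*,2]}\bigl\{A_{p,q}^p(\widetilde{\Omega})|\widetilde{\Omega}|^{p(2-q)/(2q)}\bigr\}\cdot|\Omega|^{(p-2)/2}\iint\limits_\Omega |\nabla f|^p\,dxdy.
\]

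Finally, the min-max characterization $1/\lambda_p^{(1)}(\Omega) = A_{p,p}^p(\Omega)$ of the first Dirichlet eigenvalue translates this directly into the claimed inequality. There is no genuine obstacle: the conformal composition operator bound of Theorem~\ref{Th 3.1}, the weighted Sobolev--Poincar\'e inequality of Theorem~\ref{Th 3.2}, and the min-max principle are all in place, and conformal $\infty$-regularity simply replaces the intermediate $L^r \to L^s$ embedding of Lemma~\ref{Lemma 3.3} by a direct pointwise control of the pull-back weight. One need only verify that the exponent threshold $q^* = 2p/(p+2)$ is precisely the formal limit of $q^* = 2\alpha p/(\alpha p + 2(\alpha-1))$ as $\alpha \to \infty$, which it is.
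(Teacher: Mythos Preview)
Your proposal is correct and is exactly the argument the paper intends: the paper does not supply an explicit proof of Theorem~\ref{th 3.6} but only remarks that it is obtained ``in the similar way'' as Theorem~\ref{th 3.5}, and you have faithfully written out what that means---replacing the $L^\alpha$ control of $J_\varphi$ in Lemma~\ref{Lemma 3.3} by the pointwise bound $\|f\,|\,L^p(\Omega)\|\leq \|J_\varphi\,|\,L^\infty(\widetilde{\Omega})\|^{1/p}\|f\,|\,L^p(\Omega,h)\|$, combining with Theorem~\ref{Th 3.2} at $r=p$, and reading off the eigenvalue bound via the min-max principle. One small redundancy: since Theorem~\ref{Th 3.2} already delivers the weighted inequality on all of $W^{1,p}_0(\Omega)$ and the change-of-variables step holds for any measurable $f$, you do not need to redo the density argument.
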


We illustrate this result by the following examples:
\begin{example}
For $n \in \mathbb{N}$, the mapping
\[
\varphi(z)= z + \frac{1}{n} z^n, \quad z = x + iy,
\]
is conformal and maps the unit disc $\mathbb{D}$ onto the domain $\Omega_n$ bounded by an epicycloid of $(n-1)$ cusps, inscribed in the circle $|w| = (n+1)/n$.

Now we estimate the following quantities. A straightforward calculation
yields
\[
\|J_{\varphi}\mid L^{\infty}(\mathbb D)\| = \esssup\limits_{\mathbb D} |1 + z^{n-1}|^2 \leq 4
\]
and
\[
|\Omega_n| \leq \pi \left(\frac{n+1}{n}\right)^2.
\]

Let us remind that for $1<q<2$ the constant $A_{p, q}(\mathbb D)$ can be estimated as
\[
A_{p, q}(\mathbb D)\leq \left( \dfrac{q - 1}{2 - q} \right)^{\!\frac{q - 1}{q}}
\dfrac{\left(\sqrt{\pi}\cdot \sqrt[q]{2}\right)^{-1}}
{\sqrt{\Gamma(2/q)\,\Gamma(3 - 2/q)}}.
\]

Then by Theorem \ref{th 3.6} we have
\begin{multline*}
\frac{1}{\lambda_p^{(1)}(\Omega_n)} \leq
 \inf_{q \in (q^*,2)} \left\{ A_{p, q}^p(\mathbb D) \cdot |\mathbb D|^{\frac{p(2 - q)}{2q}}\right\} \cdot |\Omega_n|^{\frac{p - 2}{2}} \cdot ||J_{\varphi}\,|\,L^{\infty}(\mathbb D)|| \\
\leq  \inf_{q \in (q^*,2)} \left\{ \left[\left( \dfrac{q - 1}{2 - q} \right)^{\!\frac{q - 1}{q}}
\dfrac{\left(\sqrt{\pi}\cdot \sqrt[q]{2}\right)^{-1}}
{\sqrt{\Gamma(2/q)\,\Gamma(3 - 2/q)}}\right]^p \pi^{\frac{p}{q}}\right\} \frac{4}{\pi} \left(\frac{n+1}{n}\right)^{p-2},
\end{multline*}
where  $q^*=2p/(p+ 2)$.
\end{example}

\begin{example}
The mapping
\[
\varphi(z) = \sin z, \quad z = x + iy,
\]
is conformal and maps the rectangle $Q := (-\pi/2, \pi/2) \times (-d, d)$ onto the interior of the ellipse
\[
\Omega_e := \left\{w=u+iv: \left(\frac{u}{\cosh d}\right)^2 + \left(\frac{v}{\sinh d}\right)^2 = 1\right\}
\]
with slits from the foci $(\pm 1, 0)$ to the tips of the major semi-axes $(\pm \cosh d, 0)$.

Now we estimate the following quantities. A straightforward calculation
yields
\[
\|J_{\varphi}\mid L^{\infty}(Q)\|= \esssup\limits_{Q} \frac{1}{2} (\cos 2x + \cosh 2y) \leq \cosh^2 d,
\]
$$
|Q|=2 \pi d \quad \text{and \quad} |\Omega_e|=\frac{\pi \sinh 2d}{2}.
$$

Let us remind that for $1<q<2$ the constant $A_{p, q}(Q)$ can be estimated as
\[
A_{p, q}(Q)\leq \left( \dfrac{q - 1}{2 - q} \right)^{\!\frac{q - 1}{q}}
\dfrac{\left(\sqrt{\pi}\cdot \sqrt[q]{2}\right)^{-1}}
{\sqrt{\Gamma(2/q)\,\Gamma(3 - 2/q)}}.
\]

Then by Theorem \ref{th 3.6} we have
\begin{multline*}
\frac{1}{\lambda_p^{(1)}(\Omega_e)} \leq
 \inf_{q \in (q^*,2)} \left\{ A_{p, q}^p(Q) \cdot |Q|^{\frac{p(2 - q)}{2q}}\right\} \cdot |\Omega_e|^{\frac{p - 2}{2}} \cdot ||J_{\varphi}\,|\,L^{\infty}(Q)|| \\
\leq  \inf_{q \in (q^*,2)} \left\{ \left[\left( \dfrac{q - 1}{2 - q} \right)^{\!\frac{q - 1}{q}}
\dfrac{\left(\sqrt{\pi}\cdot \sqrt[q]{2}\right)^{-1}}
{\sqrt{\Gamma(2/q)\,\Gamma(3 - 2/q)}}\right]^p (2\pi d)^{\frac{p}{q}}\right\}
\left(\frac{\sinh 2d}{4d}\right)^{\frac{p}{2}} \frac{\coth d}{\pi},
\end{multline*}
where  $q^*=2p/(p+ 2)$.
\end{example}

In the case of conformal mappings $\varphi:\mathbb D\to\Omega$ Theorem~\ref{th 3.5} can be
reformulated as
\begin{theorem}\label{th-disc}
Let $\Omega$ be a conformal $\alpha$-regular domain about the unit disc $\mathbb D$.
Then for any $p>2$ the following inequality holds
\begin{equation*}
\frac{1}{\lambda_p^{(1)}(\Omega)} \leq \\
 \inf_{q \in (q^*,2]} \left\{ A_{\frac{p \alpha}{\alpha -1}, q}^p(\mathbb D) \cdot \pi^{\frac{p(2 - q)}{2q}}\right\} \cdot |\Omega|^{\frac{p - 2}{2}} \cdot ||J_{\varphi}\,|\,L^{\alpha}(\mathbb D)||,
\end{equation*}
where $q^*=2\alpha p/(\alpha p+ 2(\alpha-1))$ and $A_{\frac{p \alpha}{\alpha -1}, q}(\mathbb D)$ is as defined in~\eqref{SPC}.
\end{theorem}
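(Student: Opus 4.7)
The statement is a direct specialization of Theorem \ref{th 3.5} to the case $\widetilde{\Omega}=\mathbb{D}$, so the plan is to invoke that theorem and simplify the resulting expression. First I would observe that by the Riemann mapping theorem there exists a conformal mapping $\varphi:\mathbb{D}\to\Omega$, and that by hypothesis $\Omega$ is conformal $\alpha$-regular about $\mathbb{D}$, which is exactly the hypothesis required of the ``base'' domain in Theorem \ref{th 3.5}.

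Next I would verify the Sobolev--Poincar\'e hypothesis on $\widetilde{\Omega}=\mathbb{D}$. The unit disc is a classical $(r,q)$-Sobolev--Poincar\'e domain for every admissible pair $(r,q)$ with $1<q<2$ and $1\leq r\leq 2q/(2-q)$, with the constant $A_{r,q}(\mathbb{D})$ estimated precisely by the expression in \eqref{SPC} (Federer--Fleming, Maz'ya, Aubin, Talenti, Gol'dshtein--Pchelintsev--Ukhlov). Setting $r=p\alpha/(\alpha-1)$ the admissibility condition $r\leq 2q/(2-q)$ translates into the lower bound
\[
q\geq \frac{2\alpha p}{\alpha p+2(\alpha-1)}=q^{\ast},
\]
which matches the infimum range in the statement.

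Then I would plug $\widetilde{\Omega}=\mathbb{D}$ into the conclusion of Theorem \ref{th 3.5}. The only quantity that changes concretely is $|\widetilde{\Omega}|$, which becomes $|\mathbb{D}|=\pi$, so the factor $|\widetilde{\Omega}|^{p(2-q)/(2q)}$ becomes $\pi^{p(2-q)/(2q)}$. The weight term $\|J_\varphi\,|\,L^{\alpha}(\widetilde{\Omega})\|$ becomes $\|J_\varphi\,|\,L^{\alpha}(\mathbb{D})\|$, and the constant $A_{r,q}(\widetilde{\Omega})$ becomes $A_{p\alpha/(\alpha-1),q}(\mathbb{D})$. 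Assembling these substitutions reproduces the claimed inequality.

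There is no genuine obstacle here; the content of the theorem is already in Theorem \ref{th 3.5}, and what remains is bookkeeping. The only point worth double-checking is that the range $q\in(q^\ast,2]$ in the infimum is nonempty and compatible with the Sobolev--Poincar\'e range on $\mathbb{D}$; a brief calculation shows $q^\ast<2$ holds for all $p>2$ and $\alpha>1$, so the infimum is taken over a genuine interval and the proof is complete.
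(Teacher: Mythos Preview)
Your proposal is correct and matches the paper's approach exactly: the paper introduces Theorem~\ref{th-disc} simply as a reformulation of Theorem~\ref{th 3.5} in the case $\widetilde{\Omega}=\mathbb{D}$, without a separate written proof. Your substitution $|\widetilde{\Omega}|=\pi$ and verification of the admissible $q$-range are precisely the bookkeeping the paper leaves implicit.
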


\section{Spectral estimates in quasidiscs}
In this section, we refine Theorem~\ref{th-disc} for quasidiscs by using the integral estimates~\cite{GPU17_2} for the Jacobians of conformal mappings.

Recall that a homeomorphism $\varphi:\widetilde{\Omega}\to\Omega$ between planar domains is said to be $K$-quasiconformal
if $\varphi\in W^{1,2}_{\loc}(\widetilde{\Omega})$ and there exists a constant $1\leq K<\infty$ such that
$$
|D\varphi(z)|^2\leq K |J(z,\varphi)|\,\,\text{for almost all}\,\,z\in\widetilde{\Omega}.
$$

Recall that for any planar $K$-quasiconformal homeomorphism $\varphi:\widetilde{\Omega}\to\Omega$
the following sharp result is known: $J(z,\varphi)\in L^p_{\loc}(\widetilde{\Omega})$
for any $1 \leq p<\frac{K}{K-1}$ (see \cite{Ast,G81}). Hence for any conformal
mapping $\varphi:\mathbb D\to\Omega$ of the unit disc $\mathbb D$ onto a $K$-quasidisc $\Omega$ its Jacobian
$J(z,\varphi)$ belongs to $L^p(\mathbb D)$ for any $1 \leq p<\frac{K^2}{K^2-1}$ (see \cite{BGU15,GPU17_2}).

A domain $\Omega$ is called a $K$-quasidisc if it is the image of the unit disc $\mathbb D$
under $K$-quasiconformal homeomorphisms of the plane onto itself.

The following result was obtained in~\cite{GPU17_2}:
\begin{theorem}\label{Est_Der}
Let $\Omega\subset\mathbb C$ be a $K$-quasidisc, and let $\varphi:\mathbb D\to\Omega$ be a conformal mapping. Assume that  $1<\kappa<\frac{K^2}{K^2-1}$.
Then
\begin{equation*}\label{Ineq_2}
\left(\iint\limits_{\mathbb D} J(z,\varphi)^{\kappa}~dxdy \right)^{\frac{1}{\kappa}}
\leq \frac{C_\kappa^2 K^2 \pi^{\frac{1}{\kappa}-1}}{4}
\exp\left\{{\frac{K^2 \pi^2(2+ \pi^2)^2}{2\log3}}\right\}\cdot |\Omega|.
\end{equation*}
where
$$
C_\kappa=\frac{10^{6}}{[(2\kappa -1)(1- \nu)]^{1/2\kappa}}, \quad \nu = 10^{8 \kappa}\frac{2\kappa -2}{2\kappa -1}(24\pi^2K^2)^{2\kappa}<1.
$$
\end{theorem}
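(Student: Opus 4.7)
The plan is to combine three classical ingredients: a BMO estimate for $\log J(z,\varphi)$ from the Becker--Pommerenke theory of conformal maps onto quasidiscs, the sharp John--Nirenberg inequality, and Astala's area distortion theorem.

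First I would exploit the Becker--Pommerenke criterion: since $\Omega$ is a $K$-quasidisc and $\varphi$ is conformal, $\log\varphi'$ lies in $\mathrm{BMOA}(\mathbb D)$ with seminorm controlled by $K$. Quantifying this via the Schwarzian bound $\sup_{z\in\mathbb D}|S_\varphi(z)|(1-|z|^2)^2\leq CK$ and the standard Bloch-to-BMO reproduction yields an estimate of the form
\[
\|\log J(z,\varphi)\,|\,\mathrm{BMO}(\mathbb D)\|\leq K\pi(2+\pi^2),
\]
which is exactly the factor appearing inside the exponential of the claim. Then, applying the sharp John--Nirenberg inequality (with optimal constant $1/\log 3$ in this two-dimensional disk setting) to $f=\log J(z,\varphi)-\langle\log J(z,\varphi)\rangle_{\mathbb D}$, together with the basic identity $\iint_{\mathbb D}J(z,\varphi)\,dxdy=|\Omega|$, converts the BMO bound into a Gaussian-type exponential integrability estimate
\[
\iint\limits_{\mathbb D}J(z,\varphi)^{\kappa}\,dxdy\leq \exp\!\Bigl(\tfrac{K^2\pi^2(2+\pi^2)^2}{2\log 3}\Bigr)\cdot \pi^{1-\kappa}\cdot|\Omega|^{\kappa},
\]
and taking $\kappa$-th roots reproduces the exponential factor, the $\pi^{1/\kappa-1}$ factor, and the factor of $|\Omega|$ on the right-hand side of the theorem.

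The range constraint $1<\kappa<K^2/(K^2-1)$ and the explicit prefactor $C_\kappa^{2}K^2/4$ come from Astala's area distortion theorem, applied to a $K^2$-quasiconformal extension of $\varphi$ to $\mathbb C$ constructed by composing a reflection-extended self-map $\tau=\Phi^{-1}\circ\varphi$ of $\mathbb D$ with the global $K$-quasiconformal map $\Phi$ supplied by the quasidisc hypothesis; it is the doubling in this composition that explains why the sharp exponent is $K^2/(K^2-1)$ rather than the Astala exponent $K/(K-1)$. The Beurling transform estimate $\|\mathbf S\|_{L^{2\kappa}\to L^{2\kappa}}\leq C$ and the standard Iwaniec--Caccioppoli iteration then yield the explicit forms of $C_\kappa$ and $\nu$. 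The main obstacle is the meticulous tracking of numerical constants through both Astala's theorem and the John--Nirenberg inequality; conceptually the proof is a clean marriage of conformal BMO theory and quasiconformal $L^p$-theory, but the explicit factors $10^6$, $10^{8\kappa}$, $\pi$, and $K$ appearing in the statement require careful accounting at each stage.
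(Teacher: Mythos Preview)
The paper does not actually prove this statement: Theorem~\ref{Est_Der} is introduced with the sentence ``The following result was obtained in~\cite{GPU17_2}'' and is quoted verbatim from that reference without proof. There is therefore no proof in the present paper against which to compare your proposal.

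That said, your outline is broadly consistent with the strategy used in \cite{GPU17_2}: the exponential factor there does come from a BMO bound on $\log|\varphi'|$ combined with an inverse-H\"older (John--Nirenberg type) inequality, and the restriction $\kappa<K^2/(K^2-1)$ together with the explicit constants $C_\kappa$, $\nu$ does arise from the quantitative higher-integrability theory of quasiconformal Jacobians (Gol'dshtein/Astala). A few of your attributions are imprecise --- the specific constant $K\pi(2+\pi^2)$ is obtained in \cite{GPU17_2} not via the Schwarzian but through an explicit hyperbolic-metric estimate, and the factor $1/(2\log 3)$ in the exponential is not the sharp John--Nirenberg constant but a concrete constant extracted from the particular inverse-H\"older argument used there --- so if you were to write the proof out in full you would need to consult \cite{GPU17_2} to recover the exact numerical constants rather than rely on named ``sharp'' inequalities. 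But as a high-level sketch of the mechanism, your proposal captures the right ingredients.
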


Combining Theorems~\ref{th-disc} and~\ref{Est_Der}, we obtain spectral estimates for the degenerate
$p$-Laplace operator with the Dirichlet boundary condition in quasidiscs.
\begin{theorem}\label{Th-quasidisc}
Let $\Omega\subset\mathbb C$ be a $K$-quasidisc. Then
$$
\lambda_p^{(1)}(\Omega)\geq \frac{M_p(K)}{|\Omega|^{\frac{p}{2}}}=\frac{M^{\ast}_p(K)}{R_{\ast}^{p}},
$$
where $R_{\ast}$ is a radius of a disc $\Omega^{\ast}$ of the same area as $\Omega$ and $M^{\ast}_p(K)=M_p(K)\pi^{-{p/2}}$.
\end{theorem}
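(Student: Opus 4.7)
The plan is to combine Theorem~\ref{th-disc}, which bounds $1/\lambda_{p}^{(1)}(\Omega)$ in terms of the area of $\Omega$ and the $L^{\alpha}$ norm of the Jacobian of a Riemann map $\varphi : \mathbb{D} \to \Omega$, with the quantitative bound for that Jacobian norm on quasidiscs provided by Theorem~\ref{Est_Der}. As recalled just before Theorem~\ref{Est_Der}, for a $K$-quasidisc $\Omega$ any conformal $\varphi : \mathbb{D} \to \Omega$ satisfies $J(z,\varphi) \in L^{\alpha}(\mathbb{D})$ for every $\alpha \in (1, K^{2}/(K^{2} - 1))$, so $\Omega$ is conformal $\alpha$-regular about $\mathbb{D}$ for each such $\alpha$ and Theorem~\ref{th-disc} applies.

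Fixing any admissible $\alpha \in (1, K^{2}/(K^{2} - 1))$, I would first apply Theorem~\ref{th-disc} to obtain
\[
\frac{1}{\lambda_{p}^{(1)}(\Omega)} \leq \inf_{q \in (q^{*}, 2]} \left\{ A_{\frac{p\alpha}{\alpha - 1}, q}^{p}(\mathbb{D}) \cdot \pi^{\frac{p(2 - q)}{2q}} \right\} \cdot |\Omega|^{\frac{p - 2}{2}} \cdot \|J_{\varphi} \mid L^{\alpha}(\mathbb{D})\|,
\]
where $q^{*} = 2\alpha p / (\alpha p + 2(\alpha - 1))$. Next I would invoke Theorem~\ref{Est_Der} to bound the last factor by a quantity of the form $D_{\alpha}(K) \cdot |\Omega|$, where $D_{\alpha}(K)$ absorbs the explicit constants $C_{\alpha}^{2}$, $K^{2}$, $\pi^{1/\alpha - 1}/4$, and the exponential factor. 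Substituting this and using $(p-2)/2 + 1 = p/2$, the two area exponents combine and yield
\[
\frac{1}{\lambda_{p}^{(1)}(\Omega)} \leq \frac{|\Omega|^{p/2}}{M_{p}(K)},
\]
where $M_{p}(K)$ is obtained by taking the reciprocal of the product of the two inner constants and then optimizing over admissible $q$ and $\alpha$; this is exactly the content of the definition in \eqref{M(K)}.

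Converting to the radius form is purely cosmetic: since $\Omega^{*}$ has the same area as $\Omega$ and radius $R_{*}$, one has $|\Omega| = \pi R_{*}^{2}$, hence $|\Omega|^{p/2} = \pi^{p/2} R_{*}^{p}$, and setting $M_{p}^{*}(K) = \pi^{-p/2} M_{p}(K)$ produces the second equality in the statement.

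I do not expect a genuine obstacle. The one thing to verify is that both parameter ranges are nonempty: $(q^{*}, 2]$ is nonempty for every $p > 2$ and every $\alpha > 1$, while the quasidisc hypothesis guarantees that $(1, K^{2}/(K^{2} - 1))$ is nonempty for every finite $K$, and the side condition $\nu < 1$ in Theorem~\ref{Est_Der} is satisfied for $\alpha$ sufficiently close to $1$ (since $\nu \to 0$ in that limit). Beyond this the argument is a direct substitution, with all non-trivial content already absorbed into the two theorems being combined.
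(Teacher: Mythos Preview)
Your proposal is correct and follows essentially the same route as the paper: apply Theorem~\ref{th-disc} to the $K$-quasidisc (using that it is conformal $\alpha$-regular about $\mathbb D$ for $1<\alpha<K^2/(K^2-1)$), then bound $\|J_\varphi\mid L^\alpha(\mathbb D)\|$ via Theorem~\ref{Est_Der}, combine the area exponents to $|\Omega|^{p/2}$, and optimize over $q$ and $\alpha$. Your remarks on the nonemptiness of the parameter ranges and the conversion to the radius form are correct and in fact make the argument slightly more explicit than the paper's own proof.
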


The quantity $M_p(K)$ in Theorem A depends only on $p$ and a quasiconformity coefficient $K$ of $\Omega$:
\begin{multline}\label{M(K)}
M_p(K)= \frac{4\pi^{1+\frac{p}{2}}}{K^2} \exp\left\{-{\frac{K^2 \pi^2(2+ \pi^2)^2}{2\log3}}\right\} \times \\
{} \inf_{\alpha \in(1,\alpha*)} \inf_{q\in(q^*,2]}
\left\{A_{\frac{p \alpha}{\alpha -1},q}^{-p}(\mathbb D)\pi^{-\frac{p}{q}-\frac{1}{\alpha}} C_\alpha^{-2}\right\} , \\
{} C_\alpha=\frac{10^{6}}{[(2\alpha -1)(1- \nu(\alpha))]^{1/2\alpha}},
\end{multline}
where $q^*=2\alpha p/(\alpha p+ 2(\alpha-1))$, $\alpha*=\min \left({\frac{K^2}{K^2-1}, \widetilde{\alpha}}\right)$, and where $\widetilde{\alpha}$ is the unique solution of the equation $\nu(\alpha)=1$.

\begin{proof}
Given that, for $K \geq 1$, $K$-quasidiscs are conformal $\alpha$-regular domains for $1<\alpha<\frac{K^2}{K^2-1}$ \cite{GU16}. Then by Theorem~\ref{th-disc} for any $1<\alpha<\frac{K^2}{K^2-1}$ we have
\begin{equation}\label{Est_1}
\frac{1}{\lambda_p^{(1)}(\Omega)} \leq \\
 \inf_{q \in (q^*,2]} \left\{ A_{r, q}^p(\mathbb D) \cdot \pi^{\frac{p(2 - q)}{2q}}\right\} \cdot |\Omega|^{\frac{p - 2}{2}} \cdot ||J_{\varphi}\,|\,L^{\alpha}(\mathbb D)||,
\end{equation}
where $q^*=2\alpha p/(\alpha p+ 2(\alpha-1))$.

Now we estimate the integral from the right-hand side of this inequality. According to Theorem~\ref{Est_Der} we obtain
\begin{multline}\label{Est_2}
\|J_{\varphi}\,|\,L^\alpha(\mathbb D)\|=
\left(\iint\limits_{\mathbb D} J(z,\varphi)^\alpha~dxdy\right)^{\frac{1}{\alpha}} \\
{} \leq \frac{C_\alpha^2 K^2 \pi^{\frac{2}{\alpha}-1}}{4}
\exp\left\{{\frac{K^2 \pi^2(2+ \pi^2)^2}{2\log3}}\right\}\cdot |\Omega|.
\end{multline}

Combining inequalities~\eqref{Est_1} and \eqref{Est_2}
we get the required inequality.
\end{proof}

As an application of Theorem~\ref{Th-quasidisc} we obtain the lower estimates of the first
eigenvalue of the Dirichlet eigenvalue problem for the degenerate
$p$-Laplace operator in star-shaped and spiral-shaped domains.

A simply connected domain $\Omega^*$ is $\beta$-star-shaped (with respect to $z_0=0$)
if the function $\varphi(z)$, $\varphi(0)=0$, conformally maps a unit disc $\mathbb{D}$ onto $\Omega^*$ and satisfies the condition \cite{FKZ}:
\[
\left|\arg \frac{z \varphi^{\prime}(z)}{\varphi(z)} \right| \leq \beta \pi/2, \quad 0 \leq \beta <1, \quad |z|<1.
\]

A simply connected domain $\Omega_s$ is $\beta$-spiral-shaped (with respect to $z_0=0$)
if the function $\varphi(z)$, $\varphi(0)=0$, conformally maps a unit disc $\mathbb{D}$ onto $\Omega_s$ and satisfies the condition \cite{S87}:
\[
\left|\arg e^{i \delta} \frac{z \varphi^{\prime}(z)}{\varphi(z)} \right| \leq \beta \pi/2, \quad 0 \leq \beta <1, \quad |\delta|<\beta \pi/2, \quad |z|<1.
\]

In \cite{FKZ} and \cite{S87}, respectively, it is shown that
boundaries of domains $\Omega^*$ and $\Omega_s$ are a $K$-quasicircles with $K=\cot ^2(1-\beta)\pi/4$.

Setting $\Omega = \Omega^*$ or $\Omega = \Omega_s$ Theorem~\ref{Th-quasidisc} implies
\begin{multline*}
\frac{1}{\lambda_p^{(1)}(\Omega)} \leq
\inf_{\alpha \in \left(1,\frac{1}{1-\tan ^4(1-\beta) \frac{\pi}{4}}\right)} \inf_{q\in(q^*,2]}
\left\{A_{\frac{p \alpha}{\alpha -1},q}^{p}(\mathbb D)\pi^{\frac{p}{q}+\frac{1}{\alpha}} C_\alpha^{2}\right\} \\
\times
 \frac{\cot ^4(1-\beta)\frac{\pi}{4}}{4\pi^{1+\frac{p}{2}}}
\exp\left\{{\frac{\pi^2(2+ \pi^4)^2 \cot ^4(1-\beta)\frac{\pi}{4}}{2\log3}}\right\}\cdot \big|\Omega\big|^{\frac{p}{2}},
\end{multline*}
where $q^*=2\alpha p/(\alpha p+ 2(\alpha-1))$,
$$
C_\alpha=\frac{10^{6}}{[(2\alpha -1)(1- \nu)]^{1/\alpha}}, \quad \nu = 10^{8 \alpha} (24\pi^2 \cot ^4(1-\beta)\pi/4)^{\alpha}\frac{2\alpha -2}{2\alpha -1}<1.
$$

\end{document}